\def\url@leostyle{%
	\@ifundefined{selectfont}{\def\UrlFont{\sf}}{\def\UrlFont{\small\ttfamily}}}
\numberwithin{equation}{section}
          \newcommand{\nc}{\newcommand}
          \nc{\DMO}{\DeclareMathOperator}	
          \nc{\commentout}[1]{}
          \nc{\newnotation}{\nomenclature}
          \nc{\wrap}{\cW}
          \nc{\Tw}{\mathsf{Tw}}
          \nc{\loc}{\mathsf{Loc}}
          \nc{\Top}{Top}
          \nc{\emb}{\mathsf{emb}}
          \nc{\ind}{\mathsf{Ind}}
          \nc{\Ind}{\mathsf{Ind}}
          \nc{\Loc}{\mathsf{Loc}}
          \nc{\Cob}{\mathsf{Cob}}
          \nc{\mul}{\mathsf{Mul}}
          \nc{\fat}{\mathsf{fat}}
          \nc{\cob}{\mathsf{Cob}}
          \nc{\coh}{\mathsf{Coh}}
          \nc{\Liouaut}{\Aut}
          \nc{\Liouauto}{{\Aut^o}}
          \nc{\Liouautb}{\Aut^{b}}
          \nc{\Liouautgr}{\Aut^{gr}}
          \nc{\Liouautgrb}{\Aut^{gr,b}}
          \nc{\idem}{\mathsf{Idem}}
          \nc{\sets}{\mathsf{Sets}}
          \nc{\near}{\mathsf{near}}
          \nc{\sing}{\mathsf{Sing}}
          \nc{\Sing}{\mathsf{Sing}}
          \nc{\perf}{\mathsf{Perf}}
          \nc{\block}{\mathsf{block}}
          \nc{\ssets}{\mathsf{sSets}}
          \nc{\cmpct}{\mathsf{cmpct}}
          \nc{\compact}{\mathsf{cmpct}}
          \nc{\pwrap}{\mathsf{PWrap}}
          \nc{\coder}{\mathsf{Coder}}
          \nc{\bimod}{\mathsf{Bimod}}
          \nc{\grmod}{\mathsf{GrMod}}
          \nc{\Morita}{\mathsf{Morita}}
          \nc{\morita}{\mathsf{Morita}}
          \nc{\spaces}{\mathsf{Spaces}}
          \nc{\pwrms}{\mathsf{PWrFuk}_{M,S}}
          \nc{\pwrmf}{\mathsf{PWrFuk}_{M,F}}
          \nc{\pwrapmf}{\mathsf{PWrFuk}_{M,F}}
          \nc{\fuk}{\mathsf{Fukaya}}
          \nc{\infwr}{\mathsf{InfWr}}
          \nc{\fukaya}{\mathsf{Fukaya}}
          \nc{\autml}{\mathsf{Aut}_{M,\Lambda}}
          \nc{\fukml}{\mathsf{Fukaya}_{M,\Lambda}}
          \nc{\fukmle}{\mathsf{Fukaya}_{M,\Lambda,\epsilon}}
          \nc{\fukmod}{\wrfukcompact(M)\modules}
          \nc{\lag}{\mathsf{Lag}}
          \nc{\lagm}{\lag_M}
          \nc{\lago}{\lag^o}
          \nc{\lagml}{\lag_{M,\Lambda}} 
          \nc{\lagmle}{\lag_{M,\Lambda,\epsilon}}
          \nc{\Fun}{\mathsf{Fun}}
          \nc{\fun}{\mathsf{Fun}}
          \nc{\vect}{\mathsf{Vect}}
          \nc{\chain}{\mathsf{Chain}}
          \nc{\chainn}{Chain}
          \nc{\wrfuk}{\mathsf{WrFukaya}}
          \nc{\wrfukcompact}{\mathsf{WrFukaya}_{\mathsf{cmpct}}}
          \nc{\pwrfuk}{\mathsf{PWrFukaya}}
          \nc{\inffuk}{\mathsf{InfFuk}}
          \nc{\pwrfukml}{\mathsf{PWrFukaya}_{M,\Lambda}}
          \nc{\inffukml}{\mathsf{InfFuk}_{M,\Lambda}}
          \nc{\nattrans}{\mathsf{NatTrans}}
          \nc{\corres}{\mathsf{Corres}}
          \nc{\fukep}{\fukaya_\Lambda(M,\epsilon)}
          \nc{\fukepop}{\fukaya_\Lambda(M,\epsilon)^{\op}}
          \nc{\lagep}{\lag_\Lambda(M,\epsilon)}
          \DMO{\cyl}{cyl} 
          \nc{\dbcoh}{D^b\mathsf{Coh}}
          \nc{\corr}{\mathsf{Corr}}
          \nc{\cat}{\mathsf{Cat}}
          \nc{\Cat}{\mathsf{Cat}}
          \nc{\ainfty}{\mathsf{A}_\infty}
          \nc{\inftycat}{\mathcal{C}\!\operatorname{at}_\infty}
          \nc{\inftyCat}{\mathcal{C}\!\operatorname{at}_\infty}
          \nc{\inftyGpd}{\mathcal{G}\!\operatorname{pd}_\infty}
          \nc{\Ainftycat}{\mathcal{C}\!\operatorname{at}_{A_\infty}}
          \nc{\dgcat}{\mathcal{C}\!\operatorname{at}_{dg}}
          \nc{\ainftycat}{\mathcal{C}\!\operatorname{at}_{A_\infty}}
          \nc{\stablecat}{\mathcal{C}\!\operatorname{at}_\infty^{\Ex}}
          \DMO{\im}{im}
          \DMO{\ev}{ev}
          \DMO{\stable}{Ex}
          \DMO{\inj}{inj}
          \DMO{\fib}{fib}
          \DMO{\conf}{Conf}
          \DMO{\chains}{Chains}
          \DMO{\cochains}{Cochains}
          \DMO{\cone}{Cone}
          \DMO{\Map}{Map}
          \DMO{\ran}{Ran}
          \DMO{\rot}{Rot}
          \DMO{\leg}{Leg}
          \DMO{\imm}{imm}
          \DMO{\adj}{adj}
          \DMO{\symp}{Symp}
          \DMO{\tree}{Tree}
          \DMO{\cube}{Cube}
          \DMO{\weak}{weak}
          \DMO{\strong}{strong}
          \DMO{\Hoch}{Hoch}
          \DMO{\front}{front}
          \DMO{\flow}{Flow}
          \DMO{\floer}{Floer}
          \DMO{\Maps}{Maps}
          \DMO{\exact}{exact}
          \DMO{\excess}{Excess}
          \DMO{\Decomp}{Decomp}
          \DMO{\decomp}{Decomp}
          \DMO{\collar}{collar}
          \DMO{\yoneda}{Yoneda}
          \DMO{\hamspace}{Ham}
          \DMO{\sympspace}{Symp}
          \DMO{\holomaps}{Holomaps}
          \DMO{\comp}{Comp}
          \DMO{\crit}{Crit}
          \DMO{\test}{{test}}
          \DMO{\sign}{sign}
          \DMO{\topp}{top}
          \DMO{\indx}{Index}
          \DMO{\Break}{Break} 
          \DMO{\zero}{zero} 
          \DMO{\ob}{Ob}
          \DMO{\gr}{Gr} 
          \DMO{\Gr}{Gr} 
          \DMO{\cl}{Cl} 
          \DMO{\grlag}{GrLag}
          \DMO{\Pin}{Pin}
          \DMO{\Graph}{Graph}
          \DMO{\pin}{Pin}
          \DMO{\gap}{Gap}
          \DMO{\Ex}{Ex}
          \DMO{\id}{id}
          \DMO{\End}{End}
          \DMO{\sym}{Sym}
          \DMO{\aut}{Aut}
          \DMO{\Aut}{Aut}
          \DMO{\haut}{hAut}
          \DMO{\hAut}{hAut}
          \DMO{\DK}{DK} 
          \DMO{\poly}{poly} 
          \DMO{\diff}{Diff}
          \DMO{\coll}{coll}
          \DMO{\dist}{dist} 
          \DMO{\coker}{coker} 
          \nc{\kernel}{\ker} 
          \DMO{\sspan}{span}
          \DMO{\hocolim}{hocolim}	
          \DMO{\holim}{holim}
          \DMO{\sk}{sk}
          \DMO{\ho}{ho}
          \DMO{\fin}{fin}
          \DMO{\tor}{Tor}
          \DMO{\ext}{Ext}
          \DMO{\ret}{Ret}
          \DMO{\ham}{Ham}
          \DMO{\con}{con}
          \DMO{\leaf}{leaf}
          \DMO{\supp}{supp}
          \DMO{\edge}{edge}
          \DMO{\colim}{colim}
          \DMO{\edges}{edges}
          \DMO{\Image}{image}
          \DMO{\roots}{roots}
          \DMO{\height}{height}
          \DMO{\finmod}{FinMod}
          \DMO{\leaves}{leaves}
          \DMO{\planar}{planar}
          \DMO{\vertices}{vertices}
          \nc{\lagg}{\lag^{\cG}}
          \nc{\iso}{\mathsf{Iso}}
          \nc{\Set}{\mathsf{Set}}
          \nc{\Ass}{\mathsf{ \bf Ass}}
          \nc{\Mod}{\mathsf{Mod}}
          \nc{\modules}{\mathsf{Mod}}
          \nc{\sset}{\mathsf{sSet}}
          \nc{\liou}{\mathsf{Liou}}
          \nc{\poset}{\mathsf{Poset}}
          \nc{\trno}{T^*\RR^n_{\geq 0}}
          \nc{\spectra}{\mathsf{Spectra}}
          \nc{\tensorfin}{\tensor^{\fin}}
          \nc{\lagptg}{\lag_{pt,pt}^{\cG}}
          \nc{\Fin}{\mathcal{F}\mathsf{in}}
          \nc{\lagnl}{\lag_{N,\Lambda}}
          \nc{\lagmlg}{\lag_{M,\Lambda}^{\cG}}
          \nc{\lagsplit}{\lag^{\mathsf{split}}}
          \nc{\lagktimes}{(\lag^{\dd k})^\times}
          \nc{\lagplanar}{\lag^{\times,\planar}}
          \nc{\Cont}{\text{\rm Cont}}
          \nc{\Ham}{\text{\rm Ham}}
          \nc{\Dev}{\text{\rm Dev}}
          \nc{\Lin}{\text{\rm Lin}}
          \nc{\Int}{\text{\rm Int}}
          \nc{\Hom}{\text{\rm Hom}}
          \nc{\Chord}{\text{\rm Chord}}
          \nc{\nbhd}{\mathcal{N}\text{\rm{bhd}}}
          \nc{\onef}{1_{\fukaya}}
          \nc{\smsh}{\wedge}
          \nc{\un}{\underline}
          \nc{\xto}{\xrightarrow}
          \nc{\xra}{\xto}
          \nc{\tensor}{\otimes}
          \nc{\del}{\partial}
          \nc{\dd}{\diamond}
          \nc{\tri}{\triangle}
          \nc{\bb}{\Box}
          \nc{\into}{\hookrightarrow}
          \nc{\onto}{\twoheadrightarrow}
          \nc{\contains}{\supset}
          \nc{\transverse}{\pitchfork}
          \nc{\uncirc}{\underline{\circ}}
          \nc{\Jbar}{\overline{J}}
          \nc{\Fbar}{\overline{F}}
          \nc{\delbar}{\overline{\del}}
          \nc{\thetabar}{\overline{\theta}}
          \nc{\omegabar}{\overline{\omega}}
          \nc{\Liou}{\text{\rm Liou}}
          \nc{\Yhat}{\widehat{Y}}
          \nc{\Xhat}{\widehat{X}}
          \nc{\trbar}{\overline{T^*\RR}}
          \nc{\tr}{T^*\RR}
          \nc{\tsa}{Ts\cA}
          \nc{\tsb}{Ts\cB}
          \nc{\cmbar}{\overline{\cM}}
          \nc{\crbar}{\overline{\cR}}
          \nc{\vece}{ {\vec \epsilon}}	
          \nc{\vecd}{ {\vec \delta}}
          \nc{\ov}{\overline}
          \DMO{\op}{op}
          \nc{\opp}{ ^{\op}}
          \nc{\hiro}{\textcolor{blue}}
          \nc{\YG}{\textcolor{orange}}
          \nc{\eqn}{\begin{equation}}
          \nc{\eqnn}{\begin{equation}\nonumber}
          \nc{\eqnd}{\end{equation}}
          \nc{\enum}{\begin{enumerate}}
          \nc{\enumd}{\end{enumerate}}
          \nc{\beastar}{\begin{eqnarray*}}
          \nc{\eeastar}{\end{eqnarray*}}
          \def\cA{\mathcal A}\def\cB{\mathcal B}\def\cC{\mathcal C}\def\cD{\mathcal D}
          \def\cG{\mathcal G}
          \def\cM{\mathcal M}
          \def\cR{\mathcal R}\def\cS{\mathcal S}
          \def\cV{\mathcal V}\def\cW{\mathcal W}
          \def\RR{\mathbb R}
          \def\ZZ{\mathbb Z}
          \nc{\Euc}{\mathsf{Euc}}
          \nc{\mfld}{\mathsf{Mfld}}
          \nc{\DTop}{\mathsf{DTop}}
          \nc{\simp}{\mathsf{Simp}}
          \nc{\Ainftycatt}{A_\infty Cat}
          \nc{\dgcatt}{dg Cat}
          \nc{\StableCat}{StableCat}
          \nc{\subdivision}{\mathsf{subdiv}}
          \nc{\Kan}{\mathcal{K}\mathsf{an}}
          \theoremstyle{definition}
          \newtheorem{theorem}{Theorem}[section]
          \newtheorem{prop}[theorem]{Proposition}
          \newtheorem{lemma}[theorem]{Lemma}
          \newtheorem{warning}[theorem]{Warning}
          \newtheorem{corollary}[theorem]{Corollary}
          \newtheorem{construction}[theorem]{Construction}
          \newtheorem{definition}[theorem]{Definition}
          \newtheorem{defn}[theorem]{Definition}
          \newtheorem{notation}[theorem]{Notation}
          \newtheorem{example}[theorem]{Example}
          \newtheorem{assumption}[theorem]{Assumption}
          \newtheorem{remark}[theorem]{Remark}
\title{Smooth constructions of homotopy-coherent actions}
\author[$\dagger$]{Yong-Geun Oh}
\author[$\star$]{Hiro Lee Tanaka}
\affil[$\dagger$]{Center for Geometry and Physics (IBS), Pohang, Korea \&
Department of Mathematics, POSTECH, Pohang Korea.}
\affil[$\star$]{Department of Mathematics, Texas State University}
\begin{document}

\maketitle

\begin{abstract}
We prove that, for nice classes of infinite-dimensional smooth groups $G$,  natural constructions in smooth topology and symplectic topology yield homotopically coherent group actions of $G$. This yields a bridge between infinite-dimensional smooth groups and homotopy theory.

The result relies on two computations: One showing that the diffeological homotopy groups of the Milnor classifying space $BG$ are naturally equivalent to the (continuous) homotopy groups, and a second showing that a particular strict category localizes to yield the homotopy type of $BG$. 

We then prove a result in symplectic geometry: These methods are applicable to the group of Liouville automorphisms of a Liouville sector. The present work is written with an eye toward \cite{oh-tanaka-actions}, where our constructions show that higher homotopy groups of symplectic automorphism groups map to Fukaya-categorical invariants, and where we prove a conjecture of Teleman from the 2014 ICM in the Liouville and monotone settings.

\end{abstract}

\tableofcontents

\section{Introduction}
Suppose $G$ is an automorphism group of a smooth object. For example, $G$ could be the group of diffeomorphisms of a compact smooth manifold $Q$, or the group of Liouville automorphisms of a Liouville sector (a particularly well-behaved exact symplectic manifold, possibly with boundary). It is a classical---and, in most examples of $G$, unsolved---problem to tractably chracterize the homotopy type of $G$. 

One of the most fruitful ways to study a group is to construct actions $G \circlearrowright X$; in studying the homotopy type of $G$, we are interested in constructing homotopically coherent actions of $G$ (so that the map $G \to \aut(X)$ is not necessarily a group homomorphism, but can be promoted to a map of group-like $E_1$-spaces). In this paper we describe a general method for constructing homotopically coherent actions of $G$. 

The philosophy we employ has classical roots: Suppose that one has an invariant of principle $G$-bundles (over paracompact, smooth manifolds) that respects pullback in some controlled way; such an assignment defines a sheaf on the stack $\mathsf{B}G$, and hence an action of $G$ on the object assigned to a point. However, it is common in smooth geometry to need additional choices to define invariants, and these choices may need to satisfy some form of transversality. To create a homotopically coherent action of $G$ on an object may hence require tedious (because one has to carefully make compatible choices for every $G$-bundle) or impossible (because transversality may not be achievable with  constraints imposed by compatibility) constructions. 

Our present work yields one way to make a ``minimal'' amount of choices to both reduce the tedium and make constructions possible.

To that end, we let $G$ be a well-behaved, possibly infinite-dimensional smooth group. By ``well-behaved,'' we mean that the diffeological homotopy groups are naturally equivalent to the continuous homotopy groups. (See Assumption~\ref{assumption on G} for details and examples.) We let $\widehat{BG}$ be the Milnor classifying space, considered as a diffeological space (see Notation~\ref{notation. hats}), and define a category
	$\simp(\widehat{BG})$
as follows:

Define the extended smooth simplex to be $\widehat{|\Delta_e^n|} = \{x_0 + \ldots + x_n = 1\} \subset \RR^{n+1}$. These are abstractly diffeomorphic to $\RR^n$, and smooth simplices of different dimensions admit smooth maps between them induced by the usual simplicial maps. An object of $\simp(\widehat{BG})$ is a smooth simplex $j: \widehat{|\Delta_e^n|} \to \widehat{BG}$, and a morphism from $j$ to $j'$ is the data of a commutative diagram
	\eqnn
	\xymatrix{
	\widehat{|\Delta_e^n|} \ar[rr] \ar[dr]_j && \widehat{|\Delta_e^{n'}|} \ar[dl]^{j'} \\
	& \widehat{BG}&
	}
	\eqnd
where the horizontal arrow is a simplicial map induced by an order-preserving injection $[n] \to [n']$. Our main result is as follows:

\begin{theorem}\label{main theorem}
Let $G$ be a topological group equipped with a diffeological structure  $\widehat{G}$ satisfying Assumption~\ref{assumption on G} below. 
The $\infty$-categorical localization of $\simp(\widehat{BG})$ (along all its morphisms) is homotopy equivalent to the singular simplicial set $\sing(BG)$ of the Milnor classifying space $BG$.
\end{theorem}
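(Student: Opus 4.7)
The strategy is to identify $\simp(\widehat{BG})$ with a category of elements, apply a general result computing its localization, and then match the resulting space with $BG$ using the hypothesis on $G$.

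First, I would define a semi-simplicial set $Y$ by letting $Y_n$ be the set of smooth maps $\widehat{|\Delta_e^n|} \to \widehat{BG}$, with face operators induced by the standard cofaces between extended simplices. Directly from the definition, $\simp(\widehat{BG})$ is isomorphic to the category of elements $(\Delta_{\mathrm{inj}})/Y$: an object is an $n$-simplex $j \in Y_n$, and a morphism $j \to j'$ is an order-preserving injection $[n] \hookrightarrow [n']$ making the triangle over $\widehat{BG}$ commute. This reduces the theorem to a computation about the semi-simplicial set $Y$.

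Next, I would invoke the classical fact that for any semi-simplicial set $Y$, the $\infty$-categorical localization of $(\Delta_{\mathrm{inj}})/Y$ at all morphisms is weakly equivalent to the geometric realization $|Y|$. One argument proceeds via the Grothendieck construction and straightening: the nerve of $(\Delta_{\mathrm{inj}})/Y$ presents the $\infty$-colimit over $Y$ of the trivial diagram. An elementary alternative observes that the nerve of $(\Delta_{\mathrm{inj}})/[n]$ is the barycentric subdivision of the standard simplex, so the nerve of $(\Delta_{\mathrm{inj}})/Y$ realizes to a subdivision of $|Y|$. Either way, the localization of $\simp(\widehat{BG})$ is identified with $|Y|$.

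Finally, I would compare $|Y|$ with $\sing(BG)$. Restricting each smooth map to the standard simplex $\Delta^n \subset \widehat{|\Delta_e^n|}$ and forgetting the diffeological structure produces a map from $Y$ to the (semi-)singular complex of $BG$, hence a continuous map $|Y| \to BG$. Since $\widehat{|\Delta_e^n|} \cong \RR^n$ admits a smooth deformation retract onto $\Delta^n$, the semi-simplicial set $Y$ is a faithful model for the diffeological homotopy type of $\widehat{BG}$. By Assumption~\ref{assumption on G}, this diffeological homotopy type agrees with the continuous homotopy type of $BG$, and a Whitehead-type argument then promotes the bijection on homotopy groups to the desired weak equivalence.

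I expect the principal obstacle to be this last step, which packages the two ``computations'' mentioned in the abstract: comparing the diffeological and continuous homotopy of $\widehat{BG}$, and verifying that the strict category of smooth simplices localizes correctly. The interplay between the extended simplex $\widehat{|\Delta_e^n|}$ and the compact standard simplex $\Delta^n$ is a technical but routine smoothing argument. Given Assumption~\ref{assumption on G} and the classical localization result for categories of elements, the theorem follows.
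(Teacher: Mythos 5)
Your overall strategy matches the paper's proof closely: identify $\simp(\widehat{BG})$ with the category of elements of $\sing^{C^\infty}(\widehat{BG})$, show that its localization recovers the weak homotopy type of that simplicial set via either a straightening argument or the barycentric-subdivision realization, and then compare $\sing^{C^\infty}(\widehat{BG})$ with $\sing(BG)$. Both of your suggested arguments for the second step appear verbatim in the paper (as the two proofs of Lemma~\ref{lemma. kan completion is smooth sing}), and your ``semi-simplicial set $Y$'' is exactly $\sing^{C^\infty}(\widehat{BG})$.

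The gap is in the last step, where you write ``By Assumption~\ref{assumption on G}, this diffeological homotopy type agrees with the continuous homotopy type of $BG$.'' Assumption~\ref{assumption on G} only guarantees that $\sing^{C^\infty}(\widehat{G}) \to \sing(G)$ is an equivalence for the group $G$ itself; it says nothing directly about $BG$. Passing from $G$ to $BG$ is a genuine theorem (Theorem~\ref{theorem. smooth and continuous homotopy groups BG} in the paper), and its proof requires real input: one needs the diffeological principal bundle $\widehat{EG} \to \widehat{BG}$ to induce a Kan fibration of smooth singular complexes, the smooth contractibility of $\widehat{EG}$, a verification (Lemma~\ref{lemma. fiber sequence maps}) that restriction of smooth extended simplices actually lands in continuous simplices of the Milnor model of $EG$ and $BG$ --- which involves unpacking the colimit diffeology --- and finally the five lemma applied to the map of long exact sequences of the two fibration sequences. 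None of this is automatic, and without it your ``Whitehead-type argument'' has no bijection on homotopy groups to promote. You correctly flag this step as the principal obstacle, but the proposal as written attributes it to the assumption rather than supplying the fibration-sequence comparison that makes it go through.

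Two smaller points: the deformation retract of $\widehat{|\Delta_e^n|}$ onto $|\Delta^n|$ is not needed anywhere --- the restriction map \eqref{eqn. restriction of simplices} is simply restriction of the underlying function, and the content is checking continuity, not retraction. Also, you should confirm that $\sing^{C^\infty}(\widehat{BG})$ is Kan, or (as the paper does via Proposition~\ref{prop. barycentric is equivalent}) note that the $\max$-map argument applies to non-Kan simplicial sets as well, since the paper never asserts $\sing^{C^\infty}(\widehat{BG})$ is a Kan complex.
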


The key points are that $\simp(\widehat{BG})$ is a category in the usual sense, and encodes {\em smooth} data; meanwhile, the theorem tells us that its localization is a rich $\infty$-category homotopy equivalent to (the singular simplicial set of) the space $BG$, encoding purely {\em homotopical} data. As a corollary, we obtain:

\begin{corollary}\label{cor.F}
Let $F: \simp(\widehat{BG}) \to \cC$ be a functor to an $\infty$-category $\cC$ for which every morphism in $\simp(\widehat{BG})$ is sent to an equivalence. Then $F$ induces a functor of $\infty$-categories
	\eqnn
	\sing(BG) \to \cC.
	\eqnd
In particular, if $x_0$ is any vertex of $BG$, $X=F(x_0)$ is an object of $\cC$ equipped with a homotopy coherent action of the $A_\infty$-space $G$. In other words, we obtain a map
	\eqnn
	\sing(G) \to \aut_{\cC}(X)
	\eqnd
of group-like $E_1$-objects. In particular, for every $n {\geq 0}$, we obtain group homomorphisms $\pi_n G \to \pi_n\aut_{\cC}(X)$.
\end{corollary}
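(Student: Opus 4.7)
The plan is to deduce the corollary from Theorem~\ref{main theorem} together with the universal property of $\infty$-categorical localization, and then unwind the standard correspondence between pointed maps out of a classifying space and $E_1$-actions.

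First, because $F$ sends every morphism of $\simp(\widehat{BG})$ to an equivalence in $\cC$, the universal property of the localization of $\simp(\widehat{BG})$ along all of its morphisms produces an essentially unique factorization
\eqnn
F \simeq \widetilde{F} \circ L, \qquad L: \simp(\widehat{BG}) \longrightarrow \simp(\widehat{BG})[W^{-1}],
\eqnd
with $\widetilde{F}$ a functor of $\infty$-categories. Applying Theorem~\ref{main theorem} identifies the target of $L$ with $\sing(BG)$ (up to equivalence), so $\widetilde{F}$ is the sought functor $\sing(BG) \to \cC$.

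Next, I would upgrade the pointed functor $(\sing(BG), x_0) \to (\cC, X)$ to the desired action. Since $\sing(BG)$ is a Kan complex, and hence an $\infty$-groupoid, $\widetilde{F}$ factors through the maximal subgroupoid $\cC^\simeq \subset \cC$; restricting to the connected component of $X$ gives a pointed map $\sing(BG) \to B\aut_\cC(X)$, where $B\aut_\cC(X)$ denotes the component of $X$ in $\cC^\simeq$. Looping at the basepoints yields a map of group-like $E_1$-spaces
\eqnn
\Omega \sing(BG) \longrightarrow \aut_\cC(X).
\eqnd
Composing with the standard equivalence $\sing(G) \simeq \sing(\Omega BG) \simeq \Omega\sing(BG)$, which holds because $G$ satisfies Assumption~\ref{assumption on G} (so its underlying $A_\infty$-space is equivalent as an $E_1$-space to the loop space of the Milnor $BG$), produces the advertised map $\sing(G) \to \aut_\cC(X)$. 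Taking $\pi_n$ of both sides and using $\pi_n \sing(G) = \pi_n G$ gives the group homomorphisms $\pi_n G \to \pi_n \aut_\cC(X)$.

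The genuinely non-formal input in this argument is Theorem~\ref{main theorem} itself; once that is in hand, the rest is a routine application of the universal property of localization plus the loop-space/classifying-space yoga for $E_1$-actions, so I do not anticipate a significant obstacle at the level of this corollary. The only care needed is verifying that $\widetilde{F}$ lands in $\cC^\simeq$ (immediate from $\sing(BG)$ being a Kan complex) and that the loop-space identification $\Omega\sing(BG) \simeq \sing(G)$ is compatible with the $E_1$-structures, which is standard.
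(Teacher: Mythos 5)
Your proposal is correct, and it is the natural (and intended) deduction from Theorem~\ref{main theorem} via the universal property of $\infty$-categorical localization together with the standard loop-space/classifying-space correspondence; the paper itself provides no explicit proof of the corollary. One small inaccuracy: you attribute the equivalence $\sing(G) \simeq \Omega\sing(BG)$ of $E_1$-spaces to Assumption~\ref{assumption on G}, but this is a classical fact about the Milnor construction for any topological group (it follows from the Kan fibration sequence $\sing(G) \to \sing(EG) \to \sing(BG)$ with $\sing(EG)$ contractible, exactly as appears in the proof of Theorem~\ref{theorem. smooth and continuous homotopy groups BG}); the role of Assumption~\ref{assumption on G} is only in the comparison with the diffeological side, which has already been absorbed into Theorem~\ref{main theorem}.
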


\begin{remark}
Because the $\infty$-categories of simplicial sets and of topological spaces are equivalent, the $E_1$-map of simplicial sets $\sing(G) \to \aut_{\cC}(X)$ yields an $E_1$-map of spaces 
	$G \to |\aut_{\cC}(X)|,$
where $|\aut_{\cC}(X)|$ is the geometric realization.
\end{remark}

\begin{remark}\label{remark. making F}
Thus, to produce actions of $G$, it suffices to produce functors $F$ as in the corollary. Such functors are often natural to produce, so let us explain how to construct such an $F$. 

We have chosen $BG$ to be modeled as the Milnor classifying space; this topological space can be given a diffeological space structure $\widehat{BG}$, and equipped with an enumerable diffeological principle $G$-bundle $\widehat{EG} \to \widehat{BG}$ (we recall this in Theorem~\ref{theorem. BG facts} below). 

As a result, any smooth map $j: D \to \widehat{BG}$ pulls back a diffeological principle $G$-bundle. If $G$ acts smoothly on a smooth manifold---call it $Q$---and if $D$ is a smooth paracompact manifold, the associated $Q$-fiber bundle over $D$ is now a smooth bundle in the classical (not just diffeological) sense. The theorem allows us to consider only those $D$ among the simplest of smooth manifolds---the extended $n$-dimensional simplices $\widehat{|\Delta_e^n|}$.
\end{remark}

Thus, to construct $F$, one need only create
\enum
\item An invariant $F(j) \in \cC$ for every $j:\widehat{|\Delta_e^n|} \to \widehat{BG}$. Pulling back the associated $Q$-bundle, this reduces to an invariant of the $Q$-bundle over $\widehat{|\Delta_e^n|}$ determined by $j$.
\item For every simplicial inclusion $\widehat{|\Delta_e^n|} \to \widehat{|\Delta_e^{n'}|}$, an induced map of invariants $F(j) \to F(j')$ which is an equivalence in $\cC$. These maps must respect composition.
\enumd
We note that such a construction---in many situations---is far easier than constructing a sheaf on the stack $\mathsf{B}G$. Not only are there fewer base spaces $D$ to worry about, the set of base spaces we consider is ordered (by dimension). Thus one can first choose invariants freely when $D = |\Delta^0|$, then choose invariants for $j$ when $D = |\Delta^k_e|$ in a way compatible with the inductively determined data on the boundary $j|_{\del |\Delta^k_e|}$. Moreover, there are only {\em finitely} many $j$ that map to $j'$ by definition of the category $\simp(\widehat{BG})$; so it is often easy to construct $F$ and the maps $F(j) \to F(j')$ in a way respecting the transversality requirements at hand. Transversality is harder to achieve if one must construct data for all smooth manifolds mapping to $\widehat{BG}$, and for all smooth maps between such smooth manifolds. Indeed, Corollary~\ref{cor.F} allows us to avoid several analytic annoyances in~\cite{oh-tanaka-actions}.

\begin{remark}
A $Q$-bundle over $\widehat{|\Delta_e^n|}$ is necessarily trivializable, but we note that the concrete model of $EG \to BG$ and the concrete nature of the associated bundle construction allows us to write down this $Q$-bundle quite explicitly. Moreover, choosing a trivialization would defeat the purpose---one cannot compatibly choose trivializations across all $j$ unless the bundle $EG \to BG$ itself is trivial (and the bundle is never trivial for interesting $G$).
\end{remark}

%
%
%

The proof of Theorem~\ref{main theorem} requires two ingredients. The first is purely infinity-categorical: We show that the localization of $\simp(\widehat{BG})$ is weakly homotopy equivalent to a simplicial set $\sing^{C^\infty}(\widehat{BG})$ given by the diffeologically {\em smooth} simplices of $\widehat{BG}$ (Lemma~\ref{lemma. kan completion is smooth sing}).

The second ingredient is to show that the passage between the diffeological world and the continuous world loses no homotopical information (Theorem~\ref{theorem. smooth and continuous homotopy groups BG} below).
Though we label \ref{theorem. smooth and continuous homotopy groups BG} as a theorem, the proof methods are rather elementary. We might file the theorem under ``things that should have been proven already.'' (In fact, during the refereeing process of the present work, we were made aware of the (extensive!) work in~\cite{kihara-smooth-infinite}, which appeared within three months of the arXiv appearance of the present results~\cite[Version 1]{oh-tanaka-actions}. We refer the reader to Section~\ref{section. other works} for more.) 

Finally,  Theorem~\ref{theorem. smooth and continuous homotopy groups BG} requires the hypotheses of Assumption~\ref{assumption on G}, so we exhibit two examples of infinite-dimensional groups satisfying this assumption. An obvious test case is the group $\diff(Q)$ of diffeomorphisms of a  compact smooth manifold $Q$, and we prove that $\diff(Q)$ does satisfy this assumption in  Section~\ref{section. diff Q}. 

A less trivial example is the group $\Liouauto(M)$ of Liouville automorphisms of a Liouville sector. We prove that $\Liouauto(M)$ satisfies Assumption~\ref{assumption on G} in Section~\ref{section. liouville}; this result concerning Liouville automorphisms requires some tools, notably Theorem~\ref{thm. Liou smooth approx}, and does not appear to be in previous literature.

\subsection{Application}\label{section.application}
%
%

A construction of an invariant as outlined in Remark~\ref{remark. making F} is carried out in~\cite{oh-tanaka-actions} to produce actions on wrapped Fukaya categories, and on categories of local systems. Namely, let $G=\Liouauto(M)$ be the automorphism group of a Liouville sector $M$ (see Section~\ref{section. liouville}). By pullback, any object of $\simp(B\widehat{\Liouauto(M)})$ defines a Liouville bundle over an $n$-simplex with fiber $M$. One can construct a version of the wrapped Fukaya category for this fiber bundle, and show that morphisms in $\simp(B\widehat{\Liouauto(M)})$ induce equivalences of these wrapped Fukaya categories.

In this way, Corollary~\ref{cor.F} produces a representation of $\Liouauto(M)$ on the wrapped Fukaya category of $M$; this proves a conjecture of Teleman from the 2014 ICM in the Liouville setting~\cite{teleman-icm}. In fact, the same methods apply when $M$ is compact monotone and $\Liouauto(M)$ is replaced by the group of Hamiltonian automorphisms---this is essentially an $\infty$-categorical riff of constructions already contained in~\cite{savelyev}.

When $G = \diff(Q)$, we construct a homotopically coherent action of $\diff(Q)$ on the category of local systems on $Q$ using similar methodology. Further, when $M = T^*Q$, the common framework for these constructions (i.e., because we use Corollary~\ref{cor.F} for both constructions) allows us to show that the $\Liouauto(T^*Q)$ action on the wrapped Fukaya category of $T^*Q$ extends the $\diff(Q)$ action on local systems of $Q$. (The category of local systems and the wrapped Fukaya category are equivalent by a theorem of Abouzaid.) 

We provide details and applications of this construction in~\cite{oh-tanaka-liouville-bundles} and~\cite{oh-tanaka-actions} .

\begin{remark}
We emphasize that we are able to choose coherent data to construct Fukaya categories as above precisely because of the advantages mentioned in Remark~\ref{remark. making F}.
 
The ability to consider {\em smooth} bundles---i.e., induced by smooth maps $\widehat{|\Delta_e^n|} \to \widehat{B\Liouauto(M)}$, as opposed to continuous maps $|\Delta^n| \to B\Liouauto(M)$---is necessary, as the construction must utilize tools of the smooth world (transversality, connections, et cetera). The ability to make choices for objects of $\simp(\widehat{BG})$---as opposed to the entire category of all smooth manifolds mapping to $\widehat{BG}$, and as opposed to a category with {\em all} smooth maps between simplices---makes it possible to choose compatible and transversal choices.
\end{remark}

\subsection{Relation to other works}\label{section. other works}
For ease of reference, and for this section only, we will utilize notation from the work of Kihara~\cite{kihara-smooth-infinite}. We warn that our math calligraphic font differs from that in Kihara's works. Also, because the discussion is slightly technical, this section is not self-contained; we leave the reader to the rest of this paper, and to~\cite{kihara-smooth-infinite}, for precise definitions. 

There is an important conventional difference between \cite{kihara-smooth-infinite} and \cite{christensen-wu-homotopy-theory}, which bleeds into our work. In the work of Christensen-Wu, the smooth singular complex---which we denote as $\sing^{C^\infty}$---is defined by mapping extended, smooth simplices $|\Delta^k_e|$ into a diffeological space. Kihara's works  instead use maps from the standard (compact) simplex $|\Delta^k|$, but with a very well-chosen diffeology. (It is not the diffeology of $|\Delta^k|$ as a subset of $|\Delta^k_e|$; see 1.2 of~\cite{kihara-model}.) The key property is that horns become diffeological deformation retracts of the $k$-simplex using Kihara's diffeology.

\begin{remark}
Kihara's diffeology is quite natural---first, we note that for any $0 \leq i \leq k$, the cone on the $i$th face of a $k$-simplex is naturally isomorphic to the $k$-simplex. Deleting the cone point, one witnesses a continuous embedding $|\Delta^{k-1}| \times [0,1) \into |\Delta^k|$, and Kihara inductively demands that each of these maps is smooth (after giving $|\Delta^1|$ and $|\Delta^0|$ the standard diffeologies).
\end{remark}

With this said, Kihara~\cite{kihara-quillen-equivalences} shows that the three categories  $\cS$ of simplicial sets, $\cD$ of diffeological spaces , and  $\cC^0$ of arc-generated\footnote{This means that a subset $U$ is open if and only if for every continuous map from $\RR$, the pre-image of $U$ is open.} topological spaces, admit Quillen adjunctions among them as follows: 
\enum
	\item There is a functor $\cD \to \cC^0$ sending a diffeological space to its underlying set equipped with the $D$-topology. This is denoted by $X \mapsto \widetilde X$, and admits a right adjoint denoted by $R$. $R$ sends a space $Y$ to a ``boring'' diffeology where any continuous map is declared smooth. 
	\item There is a functor $S^{\cD}$ sending a diffeological space to the simplicial set whose $k$-simplices consist of smooth maps from $\widehat{|\Delta^k|}$. Here, we are endowing $|\Delta^k|$ with the diffeology introduced by Kihara. Importantly, $S^{\cD}$ is always a Kan complex (Lemma~9.4 of~\cite{kihara-model}), while Christensen-Wu's construction---which we denote $\sing^{C^\infty}$ in our work---may not be (Warning~\ref{warning. homogeneity and fibrancy}). At the same time, the simplicial homotopy groups of $S^{\cD}$ are naturally isomorphic to the diffeological homotopy groups of the original diffeological space (Theorem~1.4 of~\cite{kihara-model}). The functor $S^{\cD}$ admits a left adjoint $|-|_{\cD}$, called realization, which takes a simplicial set to the usual geometric realization, but constructed as a colimit in diffeological spaces (as opposed to topological spaces). As usual, $S^{\cD}$ is right adjoint to $|-|_{\cD}$.
	\item The composition of these is the usual ``singular complex / realization'' adjunction between arc-generated topological spaces and simplicial sets. In the present work, we denote the singular complex by $\sing$.
\enumd
These are all Quillen equivalences. (See \cite{haraguchi-shimakawa} and~\cite{kihara-quillen-equivalences}, though the original version of \cite{haraguchi-shimakawa} seems to have a gap due to the issues in their chosen diffeology for simplices.) 

Following Kihara's notation, let us denote by $\cV_{\cD}$ the class of those diffeological spaces for which $S^{\cD}(-) \to \sing( \widetilde{-})$ is a homotopy equivalence of simplicial sets. (Equivalently, the natural map from the diffeological homotopy groups to the continuous homotopy groups of the $D$-topologized space is an isomorphism.) In the work~\cite{kihara-smooth-infinite}, Kihara identifies many diffeological spaces belonging to $\cV_{\cD}$. In particular, Corollary~11.22 of~\cite{kihara-smooth-infinite} shows that when $Q$ is a compact smooth manifold, $\diff(Q)$ is in this class.\footnote{In Kihara's notation, they show that $\diff(Q)$ is in a class called $\cW_{\cD 0}$, which is contained in $\cV_{\cD}$ by Corollary~1.6 and Remark~9.21 of ibid.}

Denote by $\widehat{|\Delta^k|}_{\text{sub}}$ the subset diffeology of $|\Delta^k| \subset \widehat{|\Delta^k_e|}$. Then the identity map of $|\Delta^k|$ is a smooth map $\widehat{|\Delta^k|} \to \widehat{|\Delta^k|}_{\text{sub}}$ when the domain is given Kihara's diffeology (Lemma~3.1 of~\cite{kihara-model}). Since the inclusion  $\widehat{|\Delta^k|}_{\text{sub}} \to \widehat{|\Delta^k_e|}$ is smooth by definition, restriction induces a map
	$
	\sing^{C^\infty} \to S^{\cD}
	$
and hence a factorization
	\eqnn
	\sing^{C^\infty}(-) \to S^{\cD}(-) \to \sing(\widetilde{-})
	\eqnd
for any input diffeological space. 

Our work applies to groups for which this composition is a weak homotopy equivalence (Assumption~\ref{assumption on G}). The first map $\sing^{C^\infty} \to S^{\cD}$ is expected to be a weak homotopy equivalence for any diffeological space (Remark~A.5 of~\cite{kihara-model}); in fact, we suspect that for the class of diffeological spaces identified in Theorem 11.20 of \cite{kihara-smooth-infinite}, every map---in particular, the last map---in this composition is a weak homotopy equivalence. While such a statement would be satisfying, in our examples we instead study the total composite and prove that the total composite is a weak homotopy equivalence without proving anything about the factorizing maps, nor invoking Kihara's results (of which we were unaware until the first incarnation of our work already appeared). Admittedly, a far more aesthetically pleasing avenue may be to get rid of $\sing^{C^\infty}$ altogether---indeed, in all our examples, the reader will readily see that the proofs in the present work explicitly demonstrate the homotopy equivalence of the arrow $S^{\cD}(-) \to \sing(\widetilde{-})$.

\subsection{Notation}
Let us set some notation and conventions. Throughout this work, we will assume that the reader is familiar with simplicial and $\infty$-categorical constructions, including the notions of coCartesian fibrations and Cartesian fibrations. For background on (co)Cartesian fibrations, we refer the reader to Section~2.9 of~\cite{oh-tanaka-localizations}, Section~4 of~\cite[Version 1]{tanaka-paracyclic}, and Section~3.2 of~\cite{htt}.

Because it will be important to distinguish between a topological space and a choice of smooth structure on it, we hereby enact the following:

\begin{notation}[Smooth players wear hats]\label{notation. hats}
We will refer to an object with smooth structure by $\widehat{B}$ (i.e., by making the symbol wear a hat). $B$ will often denote an underlying set, or space, associated to $\widehat{B}$. For example, $\widehat{BG}$ is a diffeological space, while $BG$ is the Milnor classifying space associated to the topological group $G$. (These have the same underlying set.)
\end{notation}

\begin{notation}[The nerve $N(\cC)$]\label{notation. nerve}
As usual, if $\cC$ is a category, the nerve of $\cC$ is a simplicial set whose $k$-simplices consist of commutative diagrams in $\cC$ in the shape of a $k$-simplex. We let $N(\cC)$ denote the nerve.
\end{notation}

\begin{notation}[The combinatorial $n$-simplex]
As usual we let $\Delta^n$ denote the simplicial set represented by the poset $[n]$.

In contrast, $|\Delta^n|$ refers to the topological space given by the standard $n$-simplex. $\widehat{|\Delta_e^n|}$ is the extended smooth simplex, see Definition~\ref{defn. extended simplices}. 
\end{notation}

\subsection{Acknowledgments}
We would like to thank
Dan Christensen for helpful comments and pointing out mistakes in a previous draft; we also thank the anonymous reviewers whose comments have significantly improved the exposition of the work. 
The first author is supported by the IBS project IBS-R003-D1.
The second author was supported by
IBS-CGP in Pohang, Korea and
the Isaac Newton Institute in Cambridge, England,
during the preparation of this work. This material is also based upon work supported by the National Science Foundation under Grant No. DMS-1440140 while the second author was in residence at the Mathematical Sciences Research Institute in Berkeley, California, during the Spring 2019 semester.

\section{Reminders on diffeological spaces}

We collect various results, many of which are due to the papers of Christensen-Sinnamon-Wu, Christensen-Wu, and Magnot-Watts~\cite{christensen-sinnamon-wu, christensen-wu-homotopy-theory, christensen-wu-smooth-classifying-spaces, magnot-watts}.

\begin{notation}[$\mfld$ and $\Euc$.]
Let $\mfld$ denote the category of smooth manifolds---its objects are smooth manifolds, and morphisms are smooth maps. We let $\Euc \subset \mfld$ denote the full subcategory of those manifolds that are diffeomorphic to an open subset of $\RR^n$ for some $n$.
\end{notation}

When defining a geometric object, one can take a Lawvere-type approach to define functions on that object, or one can take a functor-of-points approach to define functions into that object. A diffeological space is defined by the latter approach: We will often define a diffeological space by beginning with the data of a set $X$, and then for all $U \in \ob \Euc$, specifying which functions $U \to X$ are ``smooth.'' This defines a functor $\widehat{X}: \Euc^{\op} \to \sets$ as in the following definition:

\begin{defn}[Diffeological space]\label{defn. diffeological space}
Fix a functor $\widehat{X}: \Euc^{\op} \to \sets$ (i.e., a presheaf of sets on $\Euc$). We say that $\widehat{X}$ is a {\em diffeological space} if the following two conditions hold:
\enum
\item\label{item. functions determined on points} For any $U \in \Euc$, the function
	\eqnn
	\widehat{X}(U) \to \hom_{\sets}(\hom_{\Euc}(\RR^0, U), \widehat{X}(\RR^0))
	\eqnd
	is an injection. (That is, functions are determined by their values on points of $U$.)
\item $\widehat{X}$ is a sheaf (with the usual notion of open cover on smooth manifolds).
\enumd
A map of diffeological spaces---also known as a smooth map of diffeological spaces---is a map of presheaves.
\end{defn}

\begin{remark}
The map in~\ref{item. functions determined on points}. is induced by the structure map for presheaves
	\eqnn
	\widehat{X}(U) \times \hom_{\Euc}(\RR^0, U) \to \widehat{X}(\RR^0).
	\eqnd
\end{remark}

\begin{notation}[Underlying set $X$]
Let $\widehat{X}$ be a diffeological space. Then we say that $\widehat{X}(\RR^0)$ is the {\em underlying set} of $\widehat{X}$, and we denote it $X$. Note that by \ref{item. functions determined on points}., every element of $\widehat{X}(U)$ determines a function $f: U \to X$. If $f$ is in the image of the map in \ref{item. functions determined on points}., we say that $f$ is a {\em smooth map} from $U$ to $X$. (In the literature, this is also called a {\em plot}.)

Unwinding the definitions, we thus see that a diffeological space is equivalent to the  data of a set $X$, and for every $U \in \ob \Euc$, a subset $\widehat{X}(U) \subset \hom_{\sets}(U,X)$, subject to the following properties:
\begin{itemize}
	\item $\widehat{X}(U)$ contains all the constant maps.
	\item If $U \to U'$ is smooth, then the composite $U \to U' \to X$ is in $\widehat{X}(U)$ whenever the function $U' \to X$ is in $\widehat{X}(U')$.
	\item If there is an open cover $\{U_i\}$ of $U$ such that the function $U \to X$ factors as $U_i \to X$, and each $U_i \to X$ is in $\widehat{X}(U_i)$, then the function $U \to X$ is in $\widehat{X}(U)$. 	
\end{itemize}
\end{notation}

\begin{defn}[Smoothness of maps]
Let $\widehat X$ and $\widehat Y$ be diffeological spaces. A function $f: X \to Y$ of underlying sets is called {\em smooth} if it is induced by a map of diffeological spaces. Concretely, $f$ is smooth if for every smooth map $U \to X$, the composite $U \to X \to Y$ is smooth.
\end{defn}

\begin{remark}[$D$-topology]
Let $\widehat{X}$ be a diffeological space and $X$ its underlying set.
One can endow $X$ with the finest topology for which every smooth function $f: U \to X$ determined by $\widehat{X}$ is continuous. This is called the {\em $D$-topology} in the literature.
\end{remark}

\begin{warning}
{\em However,} we will almost never make use of the $D$-topology, and in fact our main example $X = \Liouauto(M)$ will {\em not} be endowed with the $D$-topology. So the reader should not assume that the underlying set $X$ of a diffeological space $\widehat{X}$ is endowed with the $D$-topology.
\end{warning}

\begin{example}[Smooth manifolds]\label{example. smooth manifolds}
Let $X$ be a smooth manifold. Then one can define a diffeological space by declaring $\widehat{X}(U) = \hom_{\mfld}(U,X)$; note that $X$ is indeed the underlying set of $\widehat{X}$ as implied by our notation, and the $D$-topology coincides with the usual one. This construction gives a fully faithful embedding of the category of smooth manifolds into the category of diffeological spaces.
\end{example}

\begin{example}[Subspaces]\label{example. subspace diffeology}
Let $\widehat{X}$ be a diffeological space, and let $A \subset X$ be a subset. Then $A$ determines a subsheaf $\widehat{A} \subset \widehat{X}$ where $\widehat{A}(U)$ consists of all those elements $f: U \to X$ whose image lies in $A$. We call this the subspace diffeology on $A$. Note this is an example where there is ambiguity in the topology of $A$---one could give it the subspace topology with respect to the $D$-topology on $X$, or give it the $D$-topology induced by the diffeological structure $\widehat{A}$. These topologies do not always coincide.
\end{example}

\begin{example}[Function spaces]\label{example. function spaces}
Let $X$ and $Y$ be smooth manifolds, and let $C^\infty(X,Y)$ be the set of smooth functions. We can endow this set with a diffeological space structure by declaring a function $U \to C^\infty(X,Y)$ to be smooth if and only if the adjoint map $U \times X \to Y$ is smooth. In general, the $D$-topology of this set is finer than the compact-open topology\footnote{That is, the topology inherited as a subset of the compact-open topologized collection of continuous maps from $X$ to $Y$}, finer than the weak Whitney topology\footnote{Confusingly, this is sometimes called the compact-open topology in the smooth literature.}, but coarser than the strong Whitney topology~\cite{christensen-sinnamon-wu}.
\end{example}

\begin{remark}\label{remark. diffeological spaces have lims and colims}
The category of diffeological spaces has all limits and colimits; in fact, the functor sending a diffeological space to its underlying set has both left and right adjoints, so the underlying sets of limits and colimits can be understood in the usual way. Moreover, the functor sending a diffeological space to its underlying space (with the $D$-topology) admits a right adjoint, so the $D$-topologized topological space of colimits can be understood in terms of colimits of spaces in the usual way. We also have an explicit description of the colimit diffeology: A function $U \to \colim f$ is smooth if and only if there is an open cover $\{U_i\}$ of $U$, and an object $f(j)$ in the diagram given by $f$, such that the function factors $U_i \to f(j) \to \colim f$ with $U_i \to f(j)$ being smooth.

One can also show that the category of diffeological spaces is Cartesian closed. The hom-objects are precisely the function spaces with the diffeological space structure of Example~\ref{example. function spaces}.

We also remark that these observations follow straightforwardly from the fact that the category of diffeological spaces is equivalent to the category of so-called ``concrete'' sheaves on a site. (Concreteness is precisely equivalent to condition \ref{item. functions determined on points} of Definition~\ref{defn. diffeological space}.)
\end{remark}

\subsection{Diffeological groups}

\begin{defn}\label{defn. diffeological group}
A {\em diffeological group} is a group object in the category of diffeological spaces. Concretely, this is the data of a diffeological space $\widehat{G}$, together with a group structure whose inverse and multiplication operations are smooth.
\end{defn}

\begin{example}\label{example. diff groups and their subgroups}
For any smooth manifold $X$, the diffeomorphism group $\diff(X)$ is a diffeological space. It is in fact a diffeological group (Definition~\ref{defn. diffeological group}). A priori, this diffeology is not induced by the subspace diffeology of Examples~\ref{example. subspace diffeology} and~\ref{example. function spaces}; one must reduce the number of smooth maps to guarantee that the inverse function is smooth. That is, a map $j: U \to \diff(X)$ is declared smooth not only if $(u,x) \mapsto j(u)(x)$ is smooth, but also if $(u,x) \mapsto j(u)^{-1}(x)$ is smooth (as maps $U \times X \to X$). See Example~2.12 of~\cite{christensen-wu-smooth-classifying-spaces}.
\end{example}

However, when $X$ is a smooth manifold, the subset diffeology (Example~\ref{example. subspace diffeology}) is compatible with the group diffeology (Example~\ref{example. diff groups and their subgroups}) of $\diff(X)$:

\begin{prop}\label{prop. diff X has one diffeology}
Let $X$ be a smooth manifold.
The subset diffeology of $\diff(X)$ inherited from $C^\infty(X,X)$ is identical to the diffeology of $\diff(X)$ as a diffeological group.
\end{prop}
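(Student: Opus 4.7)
The plan is to show that the two diffeologies coincide by proving that a smooth map into $\diff(X)$ for the subset diffeology is automatically smooth for the group diffeology. One inclusion is tautological: if $j: U \to \diff(X)$ is smooth for the group diffeology, then by Example~\ref{example. diff groups and their subgroups} the evaluation $(u,x) \mapsto j(u)(x)$ is smooth, which is precisely the condition to be smooth for the subset diffeology inherited from $C^\infty(X,X)$ (Example~\ref{example. function spaces}).

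For the nontrivial direction, suppose $j: U \to \diff(X)$ is smooth for the subset diffeology, so that $\ev: U \times X \to X$, $(u,x) \mapsto j(u)(x)$ is smooth. I need to show that $(u,y) \mapsto j(u)^{-1}(y)$ is smooth as well. The idea is to apply the inverse function theorem to the map
\[
F: U \times X \to U \times X, \qquad F(u,x) = (u, j(u)(x)).
\]
Smoothness of $\ev$ makes $F$ smooth, and $F$ is a bijection with set-theoretic inverse $(u,y) \mapsto (u, j(u)^{-1}(y))$.

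At any point $(u_0, x_0)$, working in a local chart of $X$ around $x_0$ (and noting $U$ is already an open subset of Euclidean space), the differential $dF_{(u_0,x_0)}$ is represented by the block matrix
\[
\begin{pmatrix} I & 0 \\ \ast & d(j(u_0))_{x_0} \end{pmatrix},
\]
whose lower-right block is invertible because $j(u_0) \in \diff(X)$. Hence $dF_{(u_0,x_0)}$ is invertible, so by the inverse function theorem $F$ is a local diffeomorphism near every point of $U \times X$. Since $F$ is already a bijection, this upgrades to a global diffeomorphism $F: U \times X \to U \times X$. Projecting the smooth inverse $F^{-1}(u,y) = (u, j(u)^{-1}(y))$ to the second factor shows that $(u,y) \mapsto j(u)^{-1}(y)$ is smooth, i.e., $j$ is smooth for the group diffeology.

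I do not expect any serious obstacle here—the only care needed is to note that the inverse function theorem is a local statement, applied in coordinate charts on $X$, and that once one has local smoothness of $F^{-1}$ everywhere, the sheaf condition on diffeological spaces lets us assemble these into global smoothness of $(u,y) \mapsto j(u)^{-1}(y)$.
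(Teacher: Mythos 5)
Your argument is correct, and it takes a genuinely different (and in fact cleaner) route than the paper. The paper fixes a Riemannian metric, shrinks neighborhoods so that nearby diffeomorphisms can be written as time-one flows of smoothly varying vector fields, and then appeals to smooth dependence of solutions to ODEs together with the fact that time-reversal of flows produces inverses. You instead observe that $F(u,x) = (u, j(u)(x))$ has block-lower-triangular differential with invertible diagonal blocks (the identity and $d(j(u))_x$), so the inverse function theorem makes $F$ a local diffeomorphism, and bijectivity upgrades this to a global diffeomorphism whose second component is exactly $(u,y) \mapsto j(u)^{-1}(y)$. This avoids any auxiliary choice of metric and any ODE theory, using only the inverse function theorem; the paper's flow argument buys nothing extra for this particular statement. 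One small remark: your closing sentence invoking the sheaf condition on diffeological spaces is superfluous --- once you know $F$ is a bijective local diffeomorphism you already have that $F^{-1}$ is globally smooth as a map of manifolds, and smoothness of the composite with the projection $U \times X \to X$ then gives the plot condition directly.
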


\begin{proof}
Let $U$ be an open subset of $\RR^n$.
It suffices to show that if $\phi: U \times X \to X$ is smooth, then the map $\psi: U \times X \to X, (u,x) \mapsto \phi_u^{-1}(x)$ is smooth. 

This can be checked locally on $U \times X$. So fix $(u_0,x_0) \in U \times X$ and choose a Riemannian metric on $X$. Shrink $U$ and $X$ so that $\psi_{u_0}(\phi(U \times X))$ is a subset of $X$ and is a geodesically convex neighborhood of $x_0$. For every $u' \in U$, this exhibits $\psi_{u_0} \circ \phi_{u'} : X \to X$ as the time-1 flow of a vector field. (The flow moves a point $x'$ to the point $\psi_{u_0} \circ \phi_{u'}(x')$ along their unique geodesic.) Because $\phi: U \times X \to X$ is smooth, these vector fields depend smoothly on $u' \in U$; let us call the vector field $v_{u'}$ so that $\flow^{t=1}_{v_{u'}} = \psi_u \circ \phi_{u'}$. Shrinking $U$ again if necessary, and because time-reversal of flows produces set-theoretic inverses, we find that
	\eqnn
	\flow^{t=1}_{-v_{u'}} = (\psi_{u_0} \circ \phi_{u'})^{-1}.
	\eqnd
In other words,
	\eqnn
	\flow^{t=1}_{-v_{u'}} \circ \psi_{u_0} = \phi_{u'}^{-1}
	\eqnd
(shrinking $U$ as necessary).
By smooth dependence of solutions to ODEs, and smoothness of negation $v \mapsto -v$, the lefthand side depends smoothly on $(u',x')$, hence $\phi_{u'}^{-1} = \psi_{u'}$ does, too. This shows that $\psi$ is a smooth map $\psi: U \times X \to X$. 
\end{proof}
%

\subsection{Some homotopy theory of diffeological spaces}

One of the most useful tools in homotopy theory is the ability to convert any topological space into a simplicial set. We recall the analogue of this for diffeological spaces.

\begin{defn}[$|\Delta_e^k|$]\label{defn. extended simplices}
Let $|\Delta_e^k| \subset \RR^{k+1} \cong \hom([k],\RR)$ denote the affine hyperplane defined by the equation $\sum_{i=0}^k t_i = 1$. We refer to $|\Delta_e^k|$ as the {\em extended} $k$-simplex, and consider it a smooth manifold in the obvious way. (It is diffeomorphic to the standard Euclidean space $\RR^k$.)

We will refer to a map $|\Delta_e^k| \to |\Delta_e^{k'}|$ as {\em simplicial} if it is the restriction of the linear map $\RR^{[k]} \to \RR^{[k']}$ induced by some map of sets $[k] \to [k']$. (This map need not respect order.)
\end{defn}

In the diffeological space setting, we make the following definition:

\begin{definition}[Smooth homotopy groups]\label{defn. smooth pi_n}
The $n$th smooth homotopy group
	\eqnn
	\pi_n^{C^\infty}(\widehat{X},x_0)
	\eqnd
for $x_0 \in X$ is the group of smooth homotopy classes of maps
	$	
	f: \widehat{|\Delta_e^n|} \to \widehat{X}	
	$
satisfying the condition that $f(y) = x_0$ for any $y \in \widehat{|\Delta_e^n|}$ for which $y$ has some coordinate equal to zero. The homotopy classes of maps are taken relative to the subset of those $y \in \widehat{|\Delta_e^n|}$ with at least one coordinate equal to zero.
\end{definition}

\begin{remark}
The above model for $\pi_n^{C^\infty}$ is equivalent to many others. (For example, one could take $\pi_n^{C^\infty}$ to be defined as smooth homotopy classes of smooth, pointed maps from the standard smooth $n$-sphere.) See Theorem~3.2 of~\cite{christensen-wu-homotopy-theory}, where $\pi_n^{C^\infty}$ is written as $\pi_n^{D}$.
\end{remark}

In the usual homotopy theory of topological spaces, we can compare two different notions of homotopy groups. The usual notion $\pi_n$ is defined by based homotopy classes of continuous maps $S^n \to X$, and the combinatorial definition is defined by classes of maps from $\Delta^n$ to the simplicial set $\sing(X)$. Let us explain  the analogue of $\sing$ in the diffeological setting.

\begin{remark}
Since any smooth manifold is a diffeological space (Example~\ref{example. smooth manifolds}), the assignment $[k] \mapsto \widehat{|\Delta_e^k|}$ defines a cosimplicial object in the category of diffeological spaces.
\end{remark}

\begin{notation}[$\sing^{C^\infty}(\widehat{X})$]
Let $\widehat{X}$ be a diffeological space. We let $\sing^{C^\infty}(\widehat{X})$ denote the simplicial set
	\eqnn
	\sing^{C^\infty}(\widehat{X}): \Delta^{\op} \to \sets,
	\qquad
	[k] \mapsto \hom_{C^\infty}(\widehat{|\Delta_e^k|}, \widehat{X})
	\eqnd
whose $k$-simplices consist of maps (of diffeological spaces) from extended $k$-simplices to $\widehat{X}$.
\end{notation}

\begin{remark}
In~\cite{christensen-wu-homotopy-theory}, the notation $S^D(X)$ is used to denote what we write as $\sing^{C^\infty}(\widehat{X})$. Also in loc. cit., the $\widehat{X}$ notation is not used to distinguish a diffeological space from its underlying set. We warn the reader that the notation $S^D$ in~\cite{christensen-wu-homotopy-theory} does not mean the same thing as the notation $S^{\cD}$ in Kihara's works. (See Section~\ref{section. other works}.)
\end{remark}

\begin{notation}[$\sing(X)$]
Let $X$ be a topological space. We let $\sing(X)$ denote the usual simplicial set of {\em continuous} simplices $|\Delta^k| \to X$. 
\end{notation}

\begin{remark}
Let $\widehat{X}$ be a diffeological space, and endow the underlying set $X$ with a topology (not necessarily the $D$-topology). We have a a natural restriction operation
	\eqn\label{eqn. restriction of simplices}
		\left(f: \widehat{|\Delta_e^k|} \to \widehat{X}\right)
		\mapsto
		\left( f|_{|\Delta^k|}: |\Delta^k| \to X \right)
	\eqnd
which sends the collection of set-theoretic functions $\widehat{|\Delta_e^k|} \to \widehat{X}$  to the collection of set-theoretic functions $|\Delta^k| \to X$. If $X$ is given the $D$-topology, this restricts to a map $\sing^{C^\infty}(\widehat{X}) \to \sing(X)$.
\end{remark}

\begin{remark}\label{remark. D top maps to weak}
Let $X$ and $Y$ be smooth manifolds, and let $C^\infty(X,Y)$ be the set of smooth functions. 
Let us recall from Example~\ref{example. function spaces}
 that the $D$-topology of $C^\infty(X,Y)$ contains the weak topology---in fact, it is the smallest arc-generated topology of $C^\infty(X,Y)$ containing the $D$-topology. As a result, we have a continuous map (given as the identity on sets) $C^\infty(X,Y)_D \to C^\infty(X,Y)_{\weak}$ and a map of simplicial sets
	\eqn\label{eqn. sing Coo to sing}
	\sing^{C^\infty}(\widehat{C^\infty(X,Y)})
	\to \sing(C^\infty(X,Y)_{\weak})
	\eqnd
given by restriction \eqref{eqn. restriction of simplices}.
\end{remark}

\begin{warning}\label{warning. homogeneity and fibrancy}
The simplicial set $\sing^{C^\infty}(\widehat{X})$ need not be a Kan complex. (In contrast: For a space $X$, $\sing(X)$ is always a Kan complex.)

In general, it seems that a ``homogeneity'' property is needed to conclude that $\sing^{C^\infty}(\widehat{X})$ is a Kan complex. For example, when $\widehat{X}$ is the diffeological space associated to a smooth manifold, $\sing^{C^\infty}(\widehat{X})$ is a Kan complex (Corollary~4.36 of~\cite{christensen-wu-homotopy-theory}). But when $\widehat{X}$ is associated to a smooth manifold with non-empty boundary, this is no longer true (Corollary~4.47 of ibid.).
\end{warning}

Following the theme of ``homogeneity implies Kan,'' we have the following.

\begin{prop}[Proposition~4.30 of~\cite{christensen-wu-homotopy-theory}.]\label{prop. Sing for diff group is Kan}
For any diffeological group $\widehat{G}$ (see Definition~\ref{defn. diffeological group}), $\sing^{C^\infty}(\widehat{G})$ is a Kan complex.
\end{prop}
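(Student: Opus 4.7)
The plan is to observe that $\sing^{C^\infty}(\widehat{G})$ inherits a simplicial group structure from $\widehat{G}$, and then invoke the classical theorem of Moore that every simplicial group is a Kan complex.

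First I would equip each set $\sing^{C^\infty}(\widehat{G})_k = \hom_{C^\infty}(\widehat{|\Delta_e^k|}, \widehat{G})$ with the pointwise group operation. Given smooth maps $f,g: \widehat{|\Delta_e^k|} \to \widehat{G}$, the pointwise product $f \cdot g$ is the composite
\[
\widehat{|\Delta_e^k|} \xrightarrow{(f,g)} \widehat{G} \times \widehat{G} \xrightarrow{m} \widehat{G},
\]
which is smooth because both diagonals and the multiplication map $m$ are smooth (Definition~\ref{defn. diffeological group}); similarly, $f \mapsto f^{-1}$ is smooth because inversion on $\widehat{G}$ is smooth. The face and degeneracy maps of $\sing^{C^\infty}(\widehat{G})$ are induced by precomposition with the simplicial maps $\widehat{|\Delta_e^k|} \to \widehat{|\Delta_e^{k'}|}$ of Definition~\ref{defn. extended simplices}, and precomposition clearly commutes with pointwise multiplication. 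Therefore $\sing^{C^\infty}(\widehat{G})$ is a simplicial group.

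Next, I would invoke Moore's theorem that any simplicial group is a Kan complex. The proof is purely combinatorial and works in any category with a group object structure on each simplicial level: given a horn $\Lambda^n_i \to \sing^{C^\infty}(\widehat{G})$, one inductively constructs a filler by multiplying by carefully chosen degenerate simplices so as to reduce the problem, at each stage, to filling a ``trivial'' horn whose missing face is the identity. Crucially, this inductive construction uses only the simplicial group operations, so the filler produced lives in $\sing^{C^\infty}(\widehat{G})$ (i.e., is a smooth map from the extended simplex), not merely in the ambient simplicial set of all set-theoretic maps.

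The only potential obstacle is ensuring that one really does not need any topological or analytic input beyond the algebra of the simplicial group, and in particular that one does not silently require a map like $[0,1] \to \widehat{G}$ joining two chosen smooth simplices. Moore's argument, however, never constructs such paths — it only uses finitely many simplicial group operations on simplices already provided by the horn and their degeneracies. Since we have verified these operations are all smooth, the filler is smooth, completing the proof.
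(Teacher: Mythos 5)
Your proof is correct and matches the cited source: Christensen--Wu's Proposition~4.30 likewise observes that $\sing^{C^\infty}(\widehat{G})$ is a simplicial group under pointwise multiplication (smoothness of multiplication and inversion making each face/degeneracy a group homomorphism) and then appeals to Moore's theorem that simplicial groups are Kan. Your extra paragraph reassuring that Moore's inductive filler stays smooth is a sound and worthwhile sanity check, but it is already guaranteed once the simplicial group structure is in place.
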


\begin{remark}\label{remark. smooth homotopy groups are combinatorial for fibrants}
Now we have two notions of homotopy groups of a diffeological space: One defined as in Definition~\ref{defn. smooth pi_n}, and another using the combinatorial definition of homotopy groups of simplicial sets. 
Let us assume $\widehat{X}$ is a diffeological space for which $\sing^{C^\infty}(\widehat{X})$ is a Kan complex. Then the natural maps
	\eqn\label{eqn. pi_n maps}
	\pi_n^{C^\infty}(\widehat{X},x_0) \to \pi_n(\sing^{C^\infty}(\widehat{X}), x_0).
	\eqnd
are bijections for $\pi_0$, and are isomorphisms for any choice of $x_0 \in X$.
(Theorem~4.11 of~\cite{christensen-wu-homotopy-theory}).
\end{remark}

In particular, this holds for diffeological groups by Proposition~\ref{prop. Sing for diff group is Kan}. Let us record this:

\begin{prop}\label{prop. smooth pi n is combinatorial}
Let $\widehat{G}$ be a diffeological group. Then the map
	$
	\pi_0^{C^\infty}(\widehat{G}) \to \pi_0(\sing^{C^\infty}(\widehat{G}))
	$
is a group isomorphism. Moreover, the map \eqref{eqn. pi_n maps} is an isomorphism for any choice of $x_0 \in G$.
\end{prop}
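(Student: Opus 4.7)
The plan is to combine Proposition~\ref{prop. Sing for diff group is Kan} with Remark~\ref{remark. smooth homotopy groups are combinatorial for fibrants} to get the bijection/isomorphism of underlying sets, and then separately check that the $\pi_0$ bijection respects the group structures in question.

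First I would invoke Proposition~\ref{prop. Sing for diff group is Kan}: since $\widehat{G}$ is a diffeological group, $\sing^{C^\infty}(\widehat{G})$ is a Kan complex. This is exactly the hypothesis needed to apply Remark~\ref{remark. smooth homotopy groups are combinatorial for fibrants} (i.e., Theorem~4.11 of~\cite{christensen-wu-homotopy-theory}), which gives that the natural comparison map
\[
\pi_n^{C^\infty}(\widehat{G}, x_0) \to \pi_n(\sing^{C^\infty}(\widehat{G}), x_0)
\]
is a bijection when $n = 0$ and an isomorphism of groups for $n \geq 1$ and any basepoint $x_0 \in G$. This already disposes of the higher-$n$ part of the statement, because both sides carry their group structures by concatenation of (smooth or combinatorial) simplices and the comparison map \eqref{eqn. restriction of simplices}--type restriction visibly intertwines these concatenations.

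The remaining issue is to promote the $\pi_0$ \emph{bijection} to a group \emph{isomorphism}. Here the group structure on $\pi_0^{C^\infty}(\widehat{G})$ is induced by the diffeological multiplication $m: \widehat{G} \times \widehat{G} \to \widehat{G}$, while the group structure on $\pi_0(\sing^{C^\infty}(\widehat{G}))$ is induced by the simplicial multiplication $\sing^{C^\infty}(m)$ on the simplicial group $\sing^{C^\infty}(\widehat{G})$. The step I would carry out is to note that $\sing^{C^\infty}$ commutes with finite products of diffeological spaces: a smooth simplex $\widehat{|\Delta_e^k|} \to \widehat{G}\times \widehat{G}$ is by definition a pair of smooth simplices into $\widehat{G}$, so the canonical map $\sing^{C^\infty}(\widehat{G}\times\widehat{G}) \to \sing^{C^\infty}(\widehat{G}) \times \sing^{C^\infty}(\widehat{G})$ is an isomorphism of simplicial sets. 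Consequently the diagram
\[
\xymatrix{
\pi_0^{C^\infty}(\widehat{G}) \times \pi_0^{C^\infty}(\widehat{G}) \ar[r]^-{m_*} \ar[d] & \pi_0^{C^\infty}(\widehat{G}) \ar[d] \\
\pi_0(\sing^{C^\infty}(\widehat{G})) \times \pi_0(\sing^{C^\infty}(\widehat{G})) \ar[r]^-{\sing^{C^\infty}(m)_*} & \pi_0(\sing^{C^\infty}(\widehat{G}))
}
\]
commutes by naturality of the comparison map applied to $m$ together with the identification above. This says precisely that the $\pi_0$-bijection is a group homomorphism, hence a group isomorphism.

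I do not expect any real obstacle here: the substantive content is absorbed into the two cited results from~\cite{christensen-wu-homotopy-theory}, and the only genuine addition is the trivial product-preservation of $\sing^{C^\infty}$ needed to upgrade the $\pi_0$ bijection to a group isomorphism. If anything merits care, it is ensuring that the two $\pi_0$ group structures are defined in a way that makes the naturality square above literally commute; this is immediate from the definitions, as both sides are defined by post-composing representing simplices with $m$.
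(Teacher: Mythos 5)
Your proposal is correct and follows the same route the paper takes: the paper records this proposition as an immediate consequence of Proposition~\ref{prop. Sing for diff group is Kan} together with Remark~\ref{remark. smooth homotopy groups are combinatorial for fibrants}, exactly as you do. The only addition on your end is the explicit check that the $\pi_0$ bijection respects the group structures via product-preservation of $\sing^{C^\infty}$ and naturality applied to $m$ -- a detail the paper leaves implicit but which is a harmless and correct elaboration.
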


\begin{remark}\label{remark. irrational torus}
It is not always true that the (continuous) homotopy groups with respect to the D-topology are isomorphic to the smooth homotopy groups. A counterexample is the irrational torus, see Example 3.20 of  \cite{christensen-wu-homotopy-theory}.
\end{remark}

\section{Smooth approximation}

Throughout, we make the following assumption on $G$:

\newenvironment{G-assumption-list}{
	  \renewcommand*{\theenumi}{(A\arabic{enumi})}
	  \renewcommand*{\labelenumi}{(A\arabic{enumi})}
	  \enumerate
	}{
	  \endenumerate
}

\begin{assumption}\label{assumption on G}
Let $G$ be a group. We assume we have chosen a topology and a diffeology on $G$ such that the following hold:

\begin{G-assumption-list}
	\item\label{item. G is group} $G$ is a topological group and a diffeological group.
	\item\label{item. smooth simplices are continuous} Any smooth map $\widehat{|\Delta_e^n|} \to \widehat{G}$ restricts to a continuous map $|\Delta^n| \to G$.
	\item\label{item. smooth pi n are pi n} The induced map of simplicial sets $\sing^{C^\infty}(\widehat{G}) \to \sing(G)$ is a homotopy equivalence.
\end{G-assumption-list}
\end{assumption}

See Sections~\ref{section. diff Q} and~\ref{section. liouville} for two examples.

The aim of the present section is to prove the following basic result; we will shortly include background details for the reader's benefit.

\begin{theorem}\label{theorem. smooth and continuous homotopy groups BG}
Suppose that $G$ is given a diffeology and a topology satisfying Assumption~\ref{assumption on G}. 
Let $\widehat{BG}$ be the Milnor classifying space, equipped with a diffeological space structure as in~\cite{christensen-wu-smooth-classifying-spaces}. Then the inclusion of smooth simplices into continuous simplices
	\eqnn
	\sing^{C^\infty}(\widehat{BG}) \to \sing(BG)
	\eqnd
is a weak homotopy equivalence of simplicial sets. Moreover, the natural maps
	\eqnn
	\pi_n^{C^\infty}(\widehat{BG}) \to \pi_n(BG)
	\eqnd
are isomorphisms.
\end{theorem}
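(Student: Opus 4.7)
The plan is to compare the Milnor fibration $G\to EG\to BG$ in the continuous setting with its diffeological analogue $\widehat{G}\to\widehat{EG}\to\widehat{BG}$ (obtained from the smooth Milnor construction of~\cite{christensen-wu-smooth-classifying-spaces}). Applying $\sing^{C^\infty}$ and $\sing$ respectively, and using (A2) of Assumption~\ref{assumption on G} to interpret the natural restriction from extended to standard simplices, one obtains a commutative diagram
\[
\begin{array}{ccccc}
\sing^{C^\infty}(\widehat{G}) & \longrightarrow & \sing^{C^\infty}(\widehat{EG}) & \longrightarrow & \sing^{C^\infty}(\widehat{BG}) \\
\downarrow & & \downarrow & & \downarrow \\
\sing(G) & \longrightarrow & \sing(EG) & \longrightarrow & \sing(BG)
\end{array}
\]
whose vertical arrows I label $\alpha$, $\beta$, $\gamma$ from left to right. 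The rightmost $\gamma$ is the map we wish to prove a weak equivalence; $\alpha$ is one by hypothesis~(A3).

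The first step is to verify that both rows are homotopy fiber sequences of Kan complexes. The bottom row is classical: $EG\to BG$ is locally trivial (being a numerable principal bundle), hence a Serre fibration, so $\sing(EG)\to\sing(BG)$ is a Kan fibration with fiber $\sing(G)$. For the top row, I would use that $\widehat{EG}\to\widehat{BG}$ is a diffeologically locally trivial principal $\widehat{G}$-bundle. Pulling back along any smooth simplex $j:\widehat{|\Delta_e^n|}\to\widehat{BG}$ gives a principal bundle over the smoothly contractible base $\widehat{|\Delta_e^n|}$, hence admits a smooth section. Such sections lift smooth simplices (and horns) in the base to smooth simplices in the total space; combined with the fiberwise $\widehat{G}$-translation and the Kan property of $\sing^{C^\infty}(\widehat{G})$ (Proposition~\ref{prop. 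Sing for diff group is Kan}), this shows the top row is a Kan fibration sequence with fiber $\sing^{C^\infty}(\widehat{G})$, and in particular $\sing^{C^\infty}(\widehat{BG})$ is Kan.

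Both $EG$ and $\widehat{EG}$ are contractible --- the former classically, the latter smoothly by the Milnor construction in~\cite{christensen-wu-smooth-classifying-spaces} --- so $\sing(EG)$ and $\sing^{C^\infty}(\widehat{EG})$ are weakly contractible Kan complexes and $\beta$ is an equivalence. Comparing the long exact sequences of homotopy groups of the two rows and applying the five lemma (using that $\alpha$ and $\beta$ are equivalences) forces $\gamma$ to induce isomorphisms on all homotopy groups, hence to be a weak equivalence of simplicial sets; this proves the first assertion. The second assertion follows by Remark~\ref{remark. smooth homotopy groups are combinatorial for fibrants}, which identifies $\pi_n^{C^\infty}(\widehat{BG})$ with $\pi_n\sing^{C^\infty}(\widehat{BG})$ since the latter is Kan; composing with $\gamma_*$ gives $\pi_n^{C^\infty}(\widehat{BG})\cong\pi_n(BG)$.

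The principal obstacle is making the upper row into a genuine Kan fibration sequence. The ingredients needed are diffeological local triviality of $\widehat{EG}\to\widehat{BG}$, smooth contractibility of the extended simplices $\widehat{|\Delta_e^n|}$ (so that pullback bundles are smoothly trivial and admit smooth sections), and smooth contractibility of $\widehat{EG}$ itself. All of these are provided by the smooth Milnor construction of~\cite{christensen-wu-smooth-classifying-spaces}; once they are in place, the $\widehat{G}$-action promotes sections over horns to smooth lifts of full simplices, and the remainder of the proof is a routine five-lemma chase.
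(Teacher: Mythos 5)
Your overall structure — the commutative ladder of fibration sequences, contractibility of $EG$ and $\widehat{EG}$, and a five-lemma chase — matches the paper's proof, and your first assertion (that $\gamma$ is a weak equivalence) goes through essentially as in the paper. The paper establishes that the top row is a Kan fibration sequence simply by citing Proposition~4.28 of~\cite{christensen-wu-homotopy-theory} (see Remark~\ref{remark. bundles are kan fibrations}), whereas you sketch a direct argument via local triviality and smooth sections over $\widehat{|\Delta^n_e|}$; that difference is cosmetic.

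The genuine gap is the clause ``and in particular $\sing^{C^\infty}(\widehat{BG})$ is Kan.'' This does not follow from what precedes it: being the base of a Kan fibration with Kan fiber does not make a simplicial set Kan, and being weakly contractible does not make $\sing^{C^\infty}(\widehat{EG})$ Kan either. In fact the paper explicitly declines to make this claim (see the remark immediately following Theorem~\ref{theorem. smooth and continuous homotopy groups BG}: ``we do not investigate whether this simplicial set is Kan in this work''). Because your proof of the second assertion rests entirely on Remark~\ref{remark. smooth homotopy groups are combinatorial for fibrants}, which requires $\sing^{C^\infty}(\widehat{BG})$ to be a Kan complex, it is not established. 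The paper's proof instead treats the smoothly defined homotopy groups directly: Theorem~\ref{theorem. BG facts}\eqref{item. EG is a bundle over BG} gives a diffeological principal bundle, whence a long exact sequence in $\pi_n^{C^\infty}$ (8.21 of~\cite{iglesias-diffeology}), and then compatibility maps as in Lemma~\ref{lemma. fiber sequence maps} allow a second five-lemma argument comparing the diffeological LES with the topological one, sidestepping any Kan-ness claim about $\sing^{C^\infty}(\widehat{BG})$. You should replace your final paragraph with this diffeological LES argument, or else actually prove the Kan property you invoke.
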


\begin{remark}
The last statement concerns the smoothly defined homotopy groups, which are not a priori equivalent to the combinatorially defined homotopy groups of simplicial sets. An a priori deduction would result from proving that $\sing^{C^\infty}(B\diff(Q))$ is a Kan complex, but we do not investigate whether this simplicial set is Kan in this work.
\end{remark}

\subsection{The classifying space \texorpdfstring{$\widehat{BG}$}{BG-hat}}\label{section. bg}
Now let us recall constructions of classifying spaces for diffeological groups. We follow~\cite{magnot-watts} and ~\cite{christensen-wu-smooth-classifying-spaces}.

Let $\widehat{G}$ be a diffeological group. Then in~\cite{magnot-watts} and~\cite{christensen-wu-smooth-classifying-spaces}, the authors construct two diffeological spaces $\widehat{EG}$ and $\widehat{BG}$, together with a smooth map $\widehat{EG} \to \widehat{BG}$. We employ the diffeology of \cite{christensen-wu-smooth-classifying-spaces}, as the resulting statements are a bit more general.

\begin{remark}
The constructions of $\widehat{EG}$ and $\widehat{BG}$ below are modeled on Milnor's join construction~\cite{milnor-universal-bundles}. One obvious reason to prefer Milnor's construction as opposed to the usual simplicial space construction is that the map $EG \to BG$ need not be locally trivial for the latter; this was pointed out as early as~\cite{segal-classifying-spaces}.
\end{remark}

\begin{construction}[$\widehat{EG}$.]\label{construction.EG}
Let $|\Delta^\omega|$ denote the infinite-dimensional simplex. That is, it is the set of those $(t_i)_{i \in \ZZ_{\geq 0}} \in \bigoplus_{i \geq 0} \RR_{\geq 0}$ for which only finitely many $t_i$ are non-zero, and $\sum t_i = 1$. As a diffeological space, we have that
	\eqnn
	\widehat{|\Delta^\omega|} \cong \colim_{i \geq 0} \widehat{|\Delta^i|}_{\text{sub}}
	\eqnd
where $\widehat{|\Delta^i|}_{\text{sub}}$ is the standard $i$-dimensional simplex, given the subspace diffeology from $\RR^{i+1}$.

\begin{warning}\label{warning. BG with Kihara diffeology}
We have followed Christensen-Wu's construction of $\widehat{EG}$, but in light of Kihara's diffeology for simplices (Section~\ref{section. other works}), it may behoove us to define the diffeology on $|\Delta^\omega|$ as a colimit of Kihara's simplices instead. Certainly the identity map between this paper's $|\Delta^\omega|$ and the colimit of Kihara's simplices is not a diffeological isomorphism. We have not checked whether the diffeological space $\widehat{EG}$ (nor $\widehat{BG}$) remains unchanged regardless of which diffeology one utilizes for simplices.
\end{warning}

Then $|\Delta^\omega| \times \prod_\omega G$ can be given the product diffeology, and we define $\widehat{EG}$ to be the quotient by identifying $(t_i, g_i) \sim (t_i', g_i')$ when the following two conditions hold:
	\enum
	\item $t_i = t_i'$ for all $i$, and
	\item If $t_i = t_i' \neq 0$, then $g_i = g_i'$.
	\enumd
Obviously $\widehat{EG}$ retains the projection map to $|\Delta^\omega|$; its fibers above an element $(t_i)$ can be identified with the product $\prod_{i \text{ s.t. } t_i \neq 0} G$.
\end{construction}

Of course, $\widehat{EG}$ is a diffeological space by virtue of the category of diffeological spaces having all limits and colimits (Remark~\ref{remark. diffeological spaces have lims and colims}). Likewise, the following is a diffeological space:

\begin{construction}[$\widehat{BG}$.]\label{construction.BG}
We let $\widehat{BG}$ be the quotient by the natural action
	\eqnn
	\widehat{EG} \times G \to \widehat{EG}, \qquad
	[(t_i,g_i)]\cdot g = [(t_i,g_i g)].
	\eqnd
\end{construction}

These satisfy a series of properties that we state as a single theorem.

\begin{theorem}\label{theorem. BG facts}
The following are true:
\enum
\item The underlying set of $\widehat{BG}$ is the Milnor construction of the classifying space $BG$~\cite{milnor-universal-bundles}.
\item \label{item. EG contractible} $\widehat{EG}$ is smoothly contractible. (Corollary~5.5 of~\cite{christensen-wu-smooth-classifying-spaces}.)
\item\label{item. EG is a bundle over BG} The map $\widehat{EG} \to \widehat{BG}$ is a diffeological principal $\widehat{G}$-bundle. (Theorem~5.3 of~\cite{christensen-wu-smooth-classifying-spaces}.)
\item For any paracompact diffeological space $\widehat{X}$, pull-back induces a bijection between smooth homotopy classes of maps $\widehat{X} \to \widehat{BG}$, and principal $\widehat{G}$-bundles over $\widehat{X}$. (Theorem~5.10 of~\cite{christensen-wu-smooth-classifying-spaces}.)
\enumd
\end{theorem}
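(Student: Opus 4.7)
The plan is to verify the four assertions in turn, noting that only the final one requires real work; the first three are essentially bookkeeping exercises that transport Milnor's classical arguments into the diffeological setting.

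For item (1), I would simply unwind definitions. The underlying set of $\widehat{|\Delta^\omega|}$ is the Milnor infinite simplex, and the equivalence relation $(t_i,g_i) \sim (t_i',g_i')$ in Construction~\ref{construction.EG} is literally Milnor's identification for the infinite join $G \ast G \ast \cdots$. Likewise, Construction~\ref{construction.BG} quotients by the diagonal $G$-action, yielding Milnor's $BG$ on underlying sets. No further argument is needed.

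For item (2), I would exhibit a smooth contracting homotopy on $\widehat{EG}$ by imitating Milnor's shift-and-collapse argument. Fix a basepoint $e \in G$ and define, for $s \in [0,1/2]$, a smooth map that linearly interpolates $[(t_0,t_1,\ldots),(g_0,g_1,\ldots)]$ toward $[(0,t_0,t_1,\ldots),(e,g_0,g_1,\ldots)]$, and for $s \in [1/2,1]$ a smooth map that linearly interpolates toward the constant point $[(1,0,0,\ldots),(e,e,\ldots)]$. Because the product diffeology on $|\Delta^\omega| \times \prod_\omega G$ is defined so that a plot is smooth iff each of its coordinate components is smooth, and linear interpolations are smooth on each $\widehat{|\Delta^i|}_{\text{sub}}$, the homotopy descends through the quotient. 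The only subtlety is to confirm that the identifications in Construction~\ref{construction.EG} are respected throughout the homotopy; this is straightforward because whenever the $t_i$-coordinate becomes nonzero during interpolation, the corresponding group coordinate is uniquely determined.

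For item (3), I would produce explicit local trivializations on the open cover $\{U_i\}$ of $\widehat{BG}$ defined by $U_i := \{[(t_j,g_j)] : t_i > 0\}$. On $U_i$, the coordinate $g_i$ is well-defined up to the right $G$-action, so the map $[(t_j,g_j)] \mapsto g_i$ defines a smooth section $U_i \to \widehat{EG}$ and trivializes the bundle as $U_i \times \widehat{G}$. Smoothness of the transition functions $U_i \cap U_j \to \widehat{G}$, $[(t_*,g_*)] \mapsto g_j g_i^{-1}$, follows from the definition of a diffeological group (inverse and multiplication are smooth). What must be checked is that this truly produces a diffeological principal bundle in the sense defined in the references, namely that any plot of $\widehat{BG}$ locally factors through a plot of $\widehat{EG}$; this reduces to the observation that plots into $\widehat{BG}$ can be lifted locally using the sections on $U_i$.

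Item (4) is the real content and is where I expect the main obstacle to lie. The strategy is classical: given a paracompact diffeological space $\widehat{X}$ and a principal $\widehat{G}$-bundle $\widehat{P}\to \widehat{X}$, choose a smooth partition of unity $\{\phi_i\}$ subordinate to a countable trivializing cover $\{V_i\}$ of $\widehat{X}$, with trivializing sections $\sigma_i : V_i \to \widehat{P}$. Then define the classifying map $\widehat{X} \to \widehat{BG}$ by
\[
x \mapsto [(\phi_i(x))_i , (\tau_i(x))_i],
\]
where $\tau_i(x) \in G$ is the unique element with $\sigma_i(x) \cdot \tau_i(x)^{-1}$ equal to a chosen reference lift (defined when $\phi_i(x)\neq 0$, hence on the support of $\phi_i$). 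Surjectivity on isomorphism classes then follows from the pullback of the universal bundle $\widehat{EG} \to \widehat{BG}$ along this map. For injectivity one uses the usual trick of classifying a bundle over $\widehat{X} \times \widehat{|\Delta^1|}$ that interpolates two given pullbacks. The genuine technical obstacle here is twofold: first, one needs the existence of smooth partitions of unity on paracompact diffeological spaces (which is a nontrivial input—paracompactness alone is weaker than in the topological setting, and one must use the notion of partition of unity appropriate to diffeology); second, one must verify that the classifying maps constructed this way are smooth as maps of diffeological spaces, not merely continuous for the $D$-topology. Both points are exactly what the references \cite{christensen-wu-smooth-classifying-spaces} and \cite{magnot-watts} establish, and I would follow their treatment rather than try to rederive it here.
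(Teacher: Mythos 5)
First, a point of calibration: the paper offers no proof of this theorem at all --- each item is quoted directly from Milnor and from Christensen--Wu, and the parenthetical citations in the statement are the entire argument. So your decision to defer item (4) to \cite{christensen-wu-smooth-classifying-spaces}, after correctly isolating the two genuine technical inputs (smooth partitions of unity on paracompact diffeological spaces, and smoothness --- not merely $D$-continuity --- of the classifying map), matches what the paper actually does. Items (1) and (3) are essentially fine as sketches: for (1), the underlying-set functor preserves colimits (Remark~\ref{remark. diffeological spaces have lims and colims}), so the quotient of underlying sets is Milnor's $BG$; for (3), note that ``$[(t_j,g_j)]\mapsto g_i$'' is well defined on $\pi^{-1}(U_i)\subset \widehat{EG}$ rather than on $U_i\subset\widehat{BG}$ --- the section you want is $[(t_j,g_j)]\mapsto[(t_j,g_jg_i^{-1})]$ --- but your intent is recoverable, and you correctly flag the real issue of locally lifting plots through the quotient diffeology.

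The genuine gap is in item (2). The homotopy you describe --- ``linearly interpolate $[(t_0,t_1,\ldots),(g_0,g_1,\ldots)]$ toward $[(0,t_0,t_1,\ldots),(e,g_0,g_1,\ldots)]$'' --- is not a well-defined map on $\widehat{EG}$. At an intermediate time $s$, the $i$th barycentric coordinate is $st_{i-1}+(1-s)t_i$; when $t_{i-1}$ and $t_i$ are both nonzero, this slot receives mass from two sources labelled by $g_{i-1}$ and $g_i$, which in general differ, so there is no consistent group label and the formula does not descend through the identifications of Construction~\ref{construction.EG}. Your claim that ``the corresponding group coordinate is uniquely determined'' is precisely where the argument breaks. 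Milnor's actual argument (and its smooth refinement in Corollary~5.5 of \cite{christensen-wu-smooth-classifying-spaces}) first homotopes the identity to a shift into, say, the odd-indexed slots \emph{one coordinate at a time}, so that each elementary slide moves mass into a currently empty slot and carries an unambiguous label; only once the image misses slot $0$ can one contract along join lines to the cone point $[(1,0,\ldots),(e,\ldots)]$ by a single well-defined interpolation. In the smooth setting one must further check that the infinite concatenation of slides is smooth at its accumulation point --- an additional reason the paper simply cites the reference rather than reproving it.
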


\begin{remark}\label{remark. EG contractible}
That $\widehat{EG}$ is smoothly contractible means that we can find a smooth map $|\Delta^1_e| \times \widehat{EG} \to \widehat{EG}$ which interpolates between a constant map and the identity map. By Lemma~4.10 of~\cite{christensen-wu-homotopy-theory}, this induces a simplicial homotopy between the identity map of $\sing^{C^\infty}(\widehat{EG})$ and a constant map---that is, $\sing^{C^\infty}(\widehat{EG})$ is weakly contractible as a simplicial set, hence weakly homotopy equivalent to a point.
\end{remark}

\begin{remark}\label{remark. bundles are kan fibrations}
Let $\widehat{E} \to \widehat{B}$ be a diffeological bundle, and assume that the fibers are diffeological spaces for whom $\sing^{C^\infty}$ is a Kan complex. Then the induced map $\sing^{C^\infty}(\widehat{E}) \to \sing^{C^\infty}(\widehat{B})$ is a Kan fibration. (See Proposition~4.28 of~\cite{christensen-wu-homotopy-theory}.)

As a result, Theorem~\ref{theorem. BG facts}\eqref{item. EG is a bundle over BG} implies that we have a Kan fibration sequence of simplicial sets
	\eqnn
	\sing^{C^\infty}(\widehat{G}) \to
	\sing^{C^\infty}(\widehat{EG}) \to
	\sing^{C^\infty}(\widehat{BG}).
	\eqnd
\end{remark}

\begin{lemma}\label{lemma. fiber sequence maps}
If $G$ satisfies~\ref{item. smooth simplices are continuous}, one has a commutative diagram of simplicial sets
		\eqnn
		\xymatrix{
		 \sing^{C^\infty}(\widehat{ G }) \ar[r] \ar[d]&  \sing^{C^\infty}(\widehat{E G }) \ar[r] \ar[d]&   \sing^{C^\infty}(\widehat{B G })\ar[d] \\
		 \sing( G ) \ar[r] & \sing(E G ) \ar[r] & \sing(B G )
		}
		\eqnd
where the vertical arrows are the restriction maps $f \mapsto f|_{\Delta^\bullet}$ from \eqref{eqn. restriction of simplices}.
\end{lemma}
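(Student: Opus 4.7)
The assertion has two parts: (i) each vertical arrow is well-defined as a map of simplicial sets, and (ii) the two squares commute. Commutativity is formal: each horizontal map in both rows is post-composition with a fixed map ($\widehat{G} \into \widehat{EG}$ as a fiber, $\widehat{EG} \onto \widehat{BG}$ as the $G$-quotient, both of which are simultaneously smooth and continuous), while each vertical map is pre-composition with the inclusion $|\Delta^n| \into \widehat{|\Delta_e^n|}$. Pre- and post-composition commute set-theoretically, so each square commutes strictly on $n$-simplices for every $n$.

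The content is (i). For the leftmost arrow this is exactly condition~\ref{item. smooth simplices are continuous} of Assumption~\ref{assumption on G}. I will propagate this through the quotient descriptions of Constructions~\ref{construction.EG} and~\ref{construction.BG}. Fix a smooth map $f : \widehat{|\Delta_e^n|} \to \widehat{EG}$. By the quotient-diffeology description (Remark~\ref{remark. diffeological spaces have lims and colims}), $f$ admits, locally on $\widehat{|\Delta_e^n|}$, a smooth lift $\tilde f$ to $\widehat{|\Delta^\omega|} \times \prod_\omega \widehat{G}$. Such a lift packages (a) a smooth map to $\widehat{|\Delta^\omega|}$, which since $\widehat{|\Delta^\omega|} \cong \colim_i \widehat{|\Delta^i|}_{\text{sub}}$ locally factors through a finite $\widehat{|\Delta^N|}_{\text{sub}}$ and hence restricts to a continuous map on $|\Delta^n|$, and (b) a sequence of smooth maps to $\widehat{G}$, each of which restricts to a continuous map on $|\Delta^n|$ by condition~\ref{item. smooth simplices are continuous} again. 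Composing with the projection $|\Delta^\omega| \times \prod_\omega G \to EG$, which is continuous by construction of the Milnor topology, one obtains continuity of $f|_{|\Delta^n|}$ near each point. Since continuity is local, $f|_{|\Delta^n|}$ is continuous on all of $|\Delta^n|$. The argument for $\widehat{BG}$ is then identical, using the further $G$-quotient $\widehat{EG} \onto \widehat{BG}$ and invoking the statement for $\widehat{EG}$ just proved in place of a further appeal to (A2).

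The main obstacle I anticipate is the bookkeeping needed to align the diffeological quotient of Construction~\ref{construction.EG} with the topological (Milnor) quotient that defines the underlying space $EG$. Concretely, one wants that the projection from $|\Delta^\omega| \times \prod_\omega G$---given a topology for which smooth maps from finite-dimensional manifolds are automatically continuous---to $EG$ with the Milnor topology is continuous. This should be essentially built into Milnor's construction, but a fully rigorous writeup would want to spell out the compatibility between Christensen--Wu's diffeology and Milnor's topology, as the two constructions follow parallel but a priori distinct recipes.
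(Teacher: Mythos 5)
Your proposal follows the same overall structure as the paper's proof: commutativity is formal, and the content is well-definedness of the vertical maps, which you establish by factoring through the colimit/quotient diffeology of $\widehat{EG}$ (and $\widehat{BG}$). There is, however, a genuine gap in step (b) of your argument. You apply \ref{item. smooth simplices are continuous} to conclude that the component smooth maps $U_i \to \widehat{G}$ restrict to continuous maps on $|\Delta^n|\cap U_i$, but \ref{item. smooth simplices are continuous} only asserts continuity of the restriction to $|\Delta^n|$ of a smooth map defined on the \emph{entire} extended simplex $\widehat{|\Delta^n_e|}$. Your lift is only defined on an open subset $U_i$ which is, in general, not diffeomorphic to $\RR^n$, so \ref{item. smooth simplices are continuous} does not literally apply. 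The fix---which the paper's proof supplies explicitly---is to invoke locality of continuity: around each point of $U_i\cap|\Delta^n|$, pass to a smaller precompact open set $V_i'\subset\subset U_i$ and reparametrize (or extend) so that the restriction of the smooth map to $V_i'$ can be viewed as the restriction of a smooth map from all of $\widehat{|\Delta^k_e|}$, whereupon \ref{item. smooth simplices are continuous} applies and yields continuity near that point. Without this localization step, the appeal to (A2) is unjustified.

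On the concern you raise about aligning the diffeological quotient with the Milnor topology: the paper sidesteps this in Remark~\ref{remark. topology on EG and BG} by \emph{defining} the topology on $EG$ and $BG$ to be the one obtained by carrying out Constructions~\ref{construction.EG} and~\ref{construction.BG} in topological spaces (using the topology of Assumption~\ref{assumption on G} on $G$). This makes the continuity of the projection $|\Delta^\omega|\times\prod_\omega G\to EG$ true by fiat and removes the bookkeeping issue you anticipate; the identification of this $EG$ with Milnor's classifying space (at least up to homotopy) is then a separate, classical matter.
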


\begin{remark}\label{remark. topology on EG and BG}
Since we have specified a topology on $G$ in Assumption~\ref{assumption on G}, the sets $EG$ and $BG$ have induced topologies by performing Constructions~\ref{construction.EG} and \ref{construction.BG} as topological spaces (not diffeological spaces). 
\end{remark}
	
\begin{proof}[Proof of Lemma~\ref{lemma. fiber sequence maps}.]
One need only check that the restriction of $f$ to $|\Delta^k|$ is continuous for every choice of $\widehat{X}$. For $\widehat{X} = \widehat{G}$, this is~\ref{item. smooth simplices are continuous}.
	
	For $\widehat{X} = \widehat{EG}$, the definition of the colimit diffeology (Remark~\ref{remark. diffeological spaces have lims and colims}) guarantees that there is some cover $|\Delta_e^k| = \bigcup_i U_i$ and smooth maps $f_i: U_i \to |\Delta^{n_i}| \times G^{n_i}$ such that $f$ fits into a diagram
		\eqnn
		\xymatrix{
		U \ar[rr]^f  && \widehat{EG} \\
		\bigcup_i U_i \ar[r]^-{\bigcup_i f_i} \ar[u]
			&\bigcup_i |\Delta^{n_i}| \times G^{n_i} \ar[r]
			& |\Delta^\omega| \times \prod_\omega G \ar[u]_{/\sim}
		}
		\eqnd
	where the bottom-right horizontal arrow is the (union of the) obvious inclusion map. (The inclusion map identifies $G^{n_i}$ with the subspace of $\prod_\omega G$ consisting of those $(g_j)$ whose components for $j >n_i$ are all equal to the identity element of $G$.)
	
	So we must verify each $f_i: U_i \to |\Delta^{n_i}| \times G^{n_i}$ is continuous, meaning we must verify the projections $U_i \to |\Delta^{n_i}|$ and $U_i \to G$ are each continuous. The map to $|\Delta^{n_i}|$ is continuous because the $D$-topology agrees with the usual topology on $|\Delta^n|$. To see that the projection to $G$ is continuous, for each $x \in U_i$, choose a smaller open set $V_i' \subset \subset U_i$ for which the restricted map $V_i' \to G$ extends to a smooth map from all of $|\Delta^k_e|$. This exhibits $U_i \to G$ as continuous near each $x \in U_i$, hence continuous.
 	
The map $|\Delta^{n_i}| \times G^{n_i} \to |\Delta^\omega| \times \prod_\omega G$ is obviously continuous, as is the projection map to $EG$ (by definition of the topology on $EG$---see Remark~\ref{remark. topology on EG and BG}), so we conclude that $f$ is continuous. This shows the middle vertical arrow in the statement of the lemma indeed lands in $\sing(EG)$.
	
	A similar argument shows that the right vertical arrow also lands in $\sing(BG)$.
	\end{proof}

\begin{proof}[Proof of Theorem~\ref{theorem. smooth and continuous homotopy groups BG}.]

Both rows in the commutative diagram of Lemma~\ref{lemma. fiber sequence maps} are Kan fibration sequences. (For the bottom row, this is classical; for the top row, see Remark~\ref{remark. bundles are kan fibrations}.) So consider the induced map of long exact sequences
	\eqnn
	\xymatrix{
	\ldots \ar[r] \ar[d]
	& \pi_n\sing^{C^\infty}(\widehat{G}) \ar[r]  \ar[d]
	& \pi_n\sing^{C^\infty}(\widehat{EG}) \ar[r]  \ar[d]
	& \pi_n\sing^{C^\infty}(\widehat{BG}) \ar[r] \ar[d]
	& \ldots \ar[d] \\
	\ldots \ar[r]
	& \pi_n\sing(G) \ar[r]
	& \pi_n\sing(EG) \ar[r]
	& \pi_n\sing(BG) \ar[r]
	& \ldots .
	}
	\eqnd
By Assumption~\ref{assumption on G}\ref{item. smooth pi n are pi n} and the contractibility of $EG$ (topologically, this is classical; diffeologically, this is Theorem~\ref{theorem. BG facts}\eqref{item. EG contractible}), we can apply the Five Lemma to conclude that the maps $\pi_n\sing^{C^\infty}(\widehat{BG}) \to \pi_n\sing(BG)$ are isomorphisms. Hence the map $\sing^{C^\infty}(\widehat{BG}) \to \sing(BG)$ is a weak homotopy equivalence.

Now let us check the claim regarding (the non-simplicially defined) smooth homotopy groups. By \ref{item. G is group}, $\widehat{G}$ is a diffeological group, hence $\widehat{EG} \to \widehat{BG}$ is a diffeological $G$-bundle (Theorem~\ref{theorem. BG facts}\eqref{item. EG is a bundle over BG}).
Thus we obtain a long exact sequence in $\pi^{C_\infty}_n$ (See 8.21 of \cite{iglesias-diffeology}). 

Further, similar techniques to the proof of Lemma~\ref{lemma. fiber sequence maps} yield induced homomorphisms $\pi_n^{C^\infty}(\widehat{BG}) \to \pi_n(BG)$,  $\pi_n^{C^\infty}(\widehat{EG}) \to \pi_n(EG)$. Moreover, these are compatible with the long exact sequences of homotopy groups. Thus we have a commutative diagram
	\eqnn
	\xymatrix{
	\ldots \ar[r] \ar[d]
	& \pi_n^{C^\infty}(\widehat{G}) \ar[r]  \ar[d]
	& \pi_n^{C^\infty}(\widehat{EG}) \ar[r]  \ar[d]
	& \pi_n^{C^\infty}(\widehat{BG}) \ar[r] \ar[d]
	& \ldots \ar[d] \\
	\ldots \ar[r]
	& \pi_n(G) \ar[r]
	& \pi_n(EG) \ar[r]
	& \pi_n(BG) \ar[r]
	& \ldots
	}.
	\eqnd
We know that $\widehat{EG}$ is smoothly contractible so the arrows $\pi_n^{C^\infty}(\widehat{EG}) \to \pi_n(EG)$ are isomorphisms. By \ref{item. smooth pi n are pi n} and Proposition~\ref{prop. smooth pi n is combinatorial}, so are the arrows $\pi_n^{C^\infty}(\widehat{G}) \to \pi_n(G)$. By using the Five Lemma again, we conclude that the smooth homotopy groups are naturally isomorphic to the continuous homotopy groups, proving the last statement of the theorem.
\end{proof}

\subsection{Example: \texorpdfstring{$\diff(Q)$}{Diff(Q)}}\label{section. diff Q}

\begin{defn}\label{defn. diff q diffeology}
Let $Q$ be a smooth manifold, not necessarily compact, let $\diff(Q)$ denote the set of smooth diffeomorphisms. We let $\widehat{\diff(Q)}$ denote the diffeological space as defined in Example~\ref{example. diff groups and their subgroups}.

We let $\diff(Q)_{\weak}$ and $\diff(Q)_{\strong}$ denote the topological spaces with topology inherited from the weak and strong topologies of $C^\infty(Q,Q)$, respectively.
\end{defn}

\begin{prop}\label{prop. diff Q is a group}
Let $Q$ be a smooth manifold, not necessarily compact. Then
\enum
\item $\diff(Q)_{\weak}$ and $\diff(Q)_{\strong}$ are both topological groups.
\item $\widehat{\diff(Q)}$ is a diffeological group.
\enumd
\end{prop}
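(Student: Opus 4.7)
The plan is to treat the two parts separately: the diffeological claim (2) is essentially built into the definition of the diffeology, while the topological claim (1) reduces to classical estimates about function space topologies.

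For part (2), I would start from the definition of the diffeology on $\widehat{\diff(Q)}$ recalled in Example~\ref{example. diff groups and their subgroups}: a map $j: U \to \widehat{\diff(Q)}$ is a plot exactly when both $(u,x) \mapsto j(u)(x)$ and $(u,x) \mapsto j(u)^{-1}(x)$ are smooth as maps $U \times Q \to Q$. This symmetric definition makes inversion smooth automatically: if $j$ is a plot then so is $u \mapsto j(u)^{-1}$, by swapping roles. For composition, I would check that if $j_1, j_2$ are plots then $(u, x) \mapsto j_1(u)(j_2(u)(x))$ is smooth as a composition $U \times Q \to U \times Q \to Q$, where the first arrow is $(u,x) \mapsto (u, j_2(u)(x))$; the inverse condition follows by writing $(j_1(u) \circ j_2(u))^{-1} = j_2(u)^{-1} \circ j_1(u)^{-1}$ and re-applying smoothness of the individual inverses.

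For part (1) I plan to verify continuity of composition and inversion in both topologies. For composition in the weak topology, the multivariable chain rule gives bounds on the $C^r$ norm of $\phi_1 \circ \phi_2$ on any compact $K \subset Q$ in terms of $C^r$ norms of $\phi_1$ on $\phi_2(K)$ and of $\phi_2$ on $K$, yielding continuity directly. For inversion in the weak topology, I would adapt the flow construction used in the proof of Proposition~\ref{prop. diff X has one diffeology}: near a fixed $\phi_0 \in \diff(Q)$, any nearby $\phi$ can be written $\phi = \flow^{t=1}_{v_\phi} \circ \phi_0$ for a vector field $v_\phi$ depending continuously on $\phi$ in the $C^\infty$-compact-open topology, whence $\phi^{-1} = \phi_0^{-1} \circ \flow^{t=1}_{-v_\phi}$ is continuous in $\phi$. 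For the strong Whitney topology, the same flow-based argument applies chart-by-chart, since subbasic open sets in the strong topology are indexed by locally finite collections of charts.

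The main obstacle I anticipate is the strong-topology case when $Q$ is non-compact, where one must ensure that the pointwise flow estimates assemble into a single strong-topology continuous map; this requires choosing the local data (Riemannian metric, geodesically convex neighborhoods) in a locally finite way and controlling the size of $v_\phi$ uniformly across the Whitney-type neighborhoods. Since this fact is standard in differential topology (see e.g.\ Hirsch), I would state the topological half as known and spend the write-up energy on the diffeological half, where the symmetric formulation of the plots makes the verification genuinely transparent.
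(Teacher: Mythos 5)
Your proposal is correct and tracks the paper's own (very brief) proof: part (2) is ``by design'' from the symmetric plot condition of Example~\ref{example. diff groups and their subgroups}, and part (1) is left as a classical fact with a one-line sketch. The only divergence is cosmetic --- for continuity of inversion you adapt the geodesic-flow trick from Proposition~\ref{prop. diff X has one diffeology}, whereas the paper points to inverse-function-theorem estimates on $C^r$ norms; both are standard and yield the same local-on-compacta control.
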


\begin{proof}
The second claim is by design (see also Proposition~\ref{prop. diff X has one diffeology}). The first claim is classical. (Let us include a very brief sketch for the reader: The first claim holds because the chain rule and the inverse function theorem yield estimates on the $C^r$ norms of compositions and inverses, while continuity in both strong and weak topologies is detected by checking estimates along compact subsets of $M$.) 
\end{proof}

Though we will not use the following, we present it for the benefit of the reader: 

\begin{prop}\label{prop. topologies on Diff Q equivalent}
Let $Q$ be compact. Then the following topologies on $\diff(Q)$ are equivalent:
\enum
	\item $\diff(Q)_{\weak}$.
	\item $\diff(Q)_{\strong}$.
	\item The $D$-topology induced from the diffeological structure of $ \widehat{\diff(Q)}$ as a diffeological group (Definition~\ref{defn. diff q diffeology}).
	\item The subset topology inherited from the $D$-topology on $C^\infty(Q,Q)$.
	\item The $D$-topology induced by first endowing $\diff(Q) \subset C^\infty(Q,Q)$ with the subset diffeology.
\enumd
\end{prop}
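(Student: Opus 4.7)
The plan is to reduce the five topologies to a single one via three equalities: $(1) = (2) = (4)$, $(3) = (5)$, and $(4) = (5)$.

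First, I would establish $(1) = (2)$. For compact $Q$, the weak and strong Whitney topologies on $C^\infty(Q,Q)$ coincide, because both are given by uniform $C^k$-convergence of all derivatives on $Q$; compactness removes the distinction between behavior on compacta and behavior near infinity. Restricting to $\diff(Q)$ gives $(1)=(2)$. Next, for $(4) = (1)$, I would invoke Example~\ref{example. function spaces}: the $D$-topology on $C^\infty(Q,Q)$ is finer than the weak and coarser than the strong Whitney topology. Since these two already agree, the $D$-topology equals both, so the subset topology on $\diff(Q)$ inherited from the $D$-topology matches the weak subset topology. The identity $(3) = (5)$ is immediate from Proposition~\ref{prop. diff X has one diffeology}, since identical diffeologies yield identical $D$-topologies.

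The heart of the proof is $(4) = (5)$, which I would deduce from the following general principle: if $A \subset X$ is $D$-open in a diffeological space $\widehat{X}$, then the $D$-topology of the subset diffeology on $A$ coincides with the subset topology inherited from the $D$-topology of $X$. One direction is automatic (every subset-topology-open restricts back under plots to $A$). For the other, let $W \subset A$ be $D$-open in the subset diffeology and let $p: U \to X$ be any plot. Then $p^{-1}(A)$ is open in $U$ because $A$ is $D$-open in $X$ and $p$ is continuous for the $D$-topology, and the restriction $p|_{p^{-1}(A)}$ is a plot for the subset diffeology on $A$. Hence $p^{-1}(W)$ is open in $p^{-1}(A)$, hence open in $U$. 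This shows $W$ is $D$-open in $X$, so $W = W \cap A$ belongs to the subset topology.

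Applying this principle with $X = C^\infty(Q,Q)$ and $A = \diff(Q)$ reduces $(4) = (5)$ to showing that $\diff(Q)$ is $D$-open in $C^\infty(Q,Q)$. By the earlier identification of the $D$-topology with the weak topology, it suffices to check that $\diff(Q)$ is open in $C^\infty(Q,Q)_{\weak}$. This is classical for compact $Q$: the inverse function theorem gives local injectivity for maps $C^1$-close to a given diffeomorphism, while $C^0$-closeness forces surjectivity because the image is open and, by compactness, must exhaust $Q$. Chaining the equalities $(1) = (2) = (4) = (5) = (3)$ finishes the proof.

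The main obstacle is the openness of $\diff(Q) \subset C^\infty(Q,Q)$ in the weak topology, which genuinely needs compactness of $Q$ (on non-compact manifolds $\diff(Q)$ is open in the strong but not the weak topology, and the whole proposition fails). The rest of the argument is an assembly of the sandwich in Example~\ref{example. function spaces}, Proposition~\ref{prop. diff X has one diffeology}, and the elementary lemma about $D$-open subspaces sketched above.
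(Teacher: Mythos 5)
Your proof is correct, and it differs from the paper's in two useful ways. First, for the link $(4)=(5)$ the paper simply cites Corollary~4.15 of~\cite{christensen-sinnamon-wu}; you instead prove the underlying lemma yourself (that for a $D$-open subset the subset topology of the $D$-topology equals the $D$-topology of the subset diffeology), and then reduce the hypothesis to the classical weak openness of $\diff(Q)\subset C^\infty(Q,Q)$ for compact $Q$. Your lemma and its proof are correct, and the reduction from $D$-openness to weak openness is justified by the sandwich in Example~\ref{example. function spaces} together with weak $=$ strong on a compact source. Second, you explicitly supply the bridge $(4)=(1)$ between the two groups $\{1,2\}$ and $\{3,4,5\}$ via that same sandwich; the paper's proof as written only records $(1)=(2)$, $(4)=(5)$, and $(3)=(5)$, and leaves the bridging equality implicit. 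The remaining pieces --- $(1)=(2)$ by weak $=$ strong on compact manifolds, and $(3)=(5)$ from Proposition~\ref{prop. diff X has one diffeology} --- match the paper exactly. Net effect: your argument is more self-contained (no external citation for $(4)=(5)$) and more explicit about the chain of equalities, at the cost of a bit more length.
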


\begin{proof}
The equivalence between 1 and 2 is classical when $Q$ is compact, as the weak and strong topologies coincide for $C^\infty(Q,Q)$. 
4 and 5 are equivalent by Corollary~4.15 of~\cite{christensen-sinnamon-wu}. Finally, 3 and 5 are equivalent because the diffeologies are equivalent by Proposition~\ref{prop. diff X has one diffeology}.
\end{proof}

%
%
%
%
%
%
	
%
%
%
%

Remark~\ref{remark. D top maps to weak} ensures that the restriction of a smooth map from an extended simplex is a continuous simplex in the weak topology. In fact, for compact manifolds, we have:

\begin{prop}\label{prop. diff q smooth approximation}
Let $Q$ be a compact smooth manifold. 
The map
 	\eqnn
	\sing^{C^\infty}(\widehat{\diff(Q)})
	\to
	\sing(\diff(Q)_{\weak})
	\eqnd
induced by \eqref{eqn. sing Coo to sing} is a homotopy equivalence of simplicial sets.
\end{prop}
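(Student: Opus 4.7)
The plan is to reduce to a bijection on all homotopy groups and then carry out a smooth approximation of continuous families of diffeomorphisms. Both simplicial sets are Kan complexes: $\sing(\diff(Q)_{\weak})$ trivially, and $\sing^{C^\infty}(\widehat{\diff(Q)})$ by Proposition~\ref{prop. Sing for diff group is Kan} applied to the diffeological group $\widehat{\diff(Q)}$ (Proposition~\ref{prop. diff Q is a group}). By Whitehead's theorem it therefore suffices to show the restriction map induces a bijection on $\pi_n$ for every basepoint $\phi_0 \in \diff(Q)$ and every $n \geq 0$.

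Since $Q$ is compact, the weak topology on $C^\infty(Q,Q)$ coincides with the Fr\'echet topology of uniform convergence of every $q$-derivative. Hence a continuous simplex $\gamma : |\Delta^n| \to \diff(Q)_{\weak}$ whose faces are all degenerate at $\phi_0$ is adjoint to a continuous $F : |\Delta^n| \times Q \to Q$ with the stronger property that each partial $(s,q) \mapsto D_q^\alpha F_s(q)$ is jointly continuous, each slice $F_s$ is a diffeomorphism, and $F_s = \phi_0$ for $s \in \del |\Delta^n|$. The central step is to smooth $F$ in the $s$-direction. Embed $Q \hookrightarrow \RR^N$ via a Whitney embedding with smooth tubular retraction $r : U \to Q$; use a smooth bump function on $|\Delta_e^n|$ to extend $F$ continuously to $|\Delta_e^n| \times Q$ so that it equals $\phi_0$ outside a small neighborhood of $|\Delta^n|$; convolve with a smooth mollifier of radius $\epsilon$ in the $s$-variable only; and post-compose with $r$. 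Because $q$-derivatives commute with the $s$-convolution, the resulting map $F_\epsilon : |\Delta_e^n| \times Q \to Q$ is smooth in $(s,q)$, agrees with $\phi_0$ on a uniform neighborhood of the boundary (so all faces of the associated smooth simplex are degenerate at $\phi_0$), and satisfies $\|(F_\epsilon)_s - F_s\|_{C^r(Q,Q)} \to 0$ uniformly in $s$ as $\epsilon \to 0$ for every $r$. Since $\diff(Q) \subset C^\infty(Q,Q)$ is $C^1$-open and $|\Delta^n|$ is compact, every slice $(F_\epsilon)_s$ is a diffeomorphism for all $\epsilon$ sufficiently small, so $F_\epsilon$ defines an element of $\sing^{C^\infty}(\widehat{\diff(Q)})$. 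The straight-line interpolation between $F$ and $F_\epsilon$ in $\RR^N$, projected back via $r$, gives a continuous homotopy through $\diff(Q)_{\weak}$ from $\gamma$ to the restriction of $F_\epsilon$, so the restriction map is surjective on $\pi_n$.

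Injectivity follows from the same procedure applied to a continuous simplicial homotopy between two given smooth simplices: mollify in the added interval-direction using a mollifier supported away from the two endpoint slices (introduce a collar near $\{0,1\}$ and do not mollify there), so the two smooth endpoints are preserved verbatim and only the interior is smoothed. The main obstacle in both directions is a bookkeeping issue at the boundary: the smoothing must neither disturb the basepoint condition on $\del |\Delta^n|$ nor fail to define a genuine smooth simplex on the \emph{extended} simplex $|\Delta_e^n|$ (rather than just on $|\Delta^n|$). Both issues are handled by the ``extend by $\phi_0$ using a bump function'' construction combined with mollifiers of sufficiently small support, so that the smoothing acts only on the interior region where no boundary data is at stake.
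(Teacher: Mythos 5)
Your approach differs from the paper's in a genuine way. The paper works locally: it represents diffeomorphisms $g$ near a fixed $f_s$ as sections of $TQ$ via a tubular neighborhood of $\operatorname{graph}(f_s) \subset Q\times Q$, chooses a fine subdivision of $|\Delta^n|$ so that each subsimplex maps into such a tubular-neighborhood chart, and then performs Whitney approximation inductively over the skeleta of the subdivision, collaring at each stage. You work globally: embed $Q \hookrightarrow \RR^N$, convolve the whole family $F$ with a mollifier in the $s$-direction, and retract back via $r : U \to Q$. This avoids the subdivision bookkeeping entirely, and your verifications that $F_\epsilon$ is jointly smooth (because $q$-derivatives pass through the $s$-convolution) and consists of diffeomorphisms for small $\epsilon$ (via $C^1$-openness of $\diff(Q)$ and compactness of $|\Delta^n|$) are correct.

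However, the boundary handling in your surjectivity step has a real gap. For $s_0 \in \partial|\Delta^n|$, the mollified value $(F_\epsilon)_{s_0}$ is an average of $F_{s'}$ over a ball of $s'$ around $s_0$; no matter how small $\epsilon$ is, this ball meets the interior of $|\Delta^n|$, where $F_{s'} \neq \phi_0$. So $(F_\epsilon)_{s_0} \neq \phi_0$ in general, and $F_\epsilon$ does not restrict to degenerate simplices along the faces of $|\Delta_e^n|$, hence does not define a class in $\pi_n$ at $\phi_0$. Extending $F$ by $\phi_0$ outside $|\Delta^n|$ (with or without a bump function) does not help, because the offending contributions come from the interior side of $\partial|\Delta^n|$; for the same reason, the straight-line homotopy from $F$ to $F_\epsilon$ moves the boundary values and is not a homotopy rel $\phi_0$. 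The needed repair is exactly what you invoke for injectivity and what the paper's proof does inductively: \emph{before} mollifying, homotope $F$ by precomposing with a retraction of $|\Delta^n|$ that collapses a collar of $\partial|\Delta^n|$, so that $F$ becomes constantly equal to $\phi_0$ on an honest collar neighborhood of $\partial|\Delta^n|$ inside $|\Delta^n|$. This collaring homotopy stays in families of diffeomorphisms (it only reparametrizes the simplex, never interpolates in $\diff(Q)$) and fixes the boundary. Only after collaring does mollification with sufficiently small support leave the boundary region untouched, give the required degenerate faces, and permit a (constant) smooth extension to $|\Delta_e^n|$. As written, the surjectivity argument omits this step and is incomplete.
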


\begin{proof}
Choose a basepoint $f_0 \in \diff(Q)$. In this paragraph, we review some basic definitions of simplicial homotopy groups: Recall that an element of $\pi_n (\sing(\diff(Q)_{\weak}), f_0)$ is determined by a map $\Delta^n \to \sing(\diff(Q)_{\weak})$ which restricts to the degenerate $(n-1)$-simplex determined by $f_0$ along every face of $\Delta^n$. Equivalently, this is the data of a continuous map $|\Delta^n| \to \diff(Q)_{\weak}$ sending the boundary $\del |\Delta^n|$ to $f_0$. A homotopy between $f$ and $g$ respecting $f_0$ is, by definition, a map from $\Delta^{n+1}$ to $\sing(\diff(Q)_{\weak}$ which restricts to $f$ and to $g$ along two faces, while restricting to the degenerate simplex $n$-simplex determined by $f_0$ along all other faces.

To show that the map on $\pi_n$ is a surjection, we must exhibit a homotopy from $f$ to a function $g$ for which $g$ is the restriction of a smooth map $|\Delta^n_e| \to \widehat{\diff(Q)}$.

Fix a point $(s,x) \in |\Delta^n| \times Q$; we let $f_s: Q \to Q$ denote the diffeomorphism determined by $s$. Now, the graph of $f_s$ (as a submanifold of $Q \times Q$) admits a tubular neighborhood that may be identified with (an open subset of the zero section of) the normal bundle of the graph; or, under the natural identification of the graph of $f_s$ with the domain of $f_s$, with the tangent bundle to $Q$.

In what follows, we let $\dim Q = d$.

With the definition of the weak topology in mind, we invoke the following consequence of the tubular neighborhood theorem:  For every $x \in Q$, for every chart $\phi: U \into \RR^{d}$ around $x$, for every chart $\psi: V \into \RR^{d}$ about $f_s(x)$, and for every compact ball $K$ with $x \in K \subset U$, there is some $\epsilon$ such that as long as a diffeomorphism $g_s: Q \to Q$ is $\epsilon$-close to $f_s$ along $K$, then the graph of $g_s$ may be written as the graph of some section of $TQ$ near $K$. More precisely, $\epsilon$-closeness guarantees that there exists some smooth section $G: Q \to TQ$ so that one has a commutative diagram
	\eqnn
	\xymatrix{
	K \ar[r]^{G|_K} \ar[d] & TQ \ar@{-->}[d] & Q \ar[l] \ar[dl]^{(\id_Q,f_s)} \\
	Q \ar[r]^{(\id_Q,g_s)} & Q \times Q.
	}
	\eqnd
Here, the vertical map $TQ \dashrightarrow Q \times Q$ (the dashed arrow means the map is defined on a small open neighborhood of the zero section) is the tubular neighborhood around the graph of $f_s$.

Because the map $f: |\Delta^n| \to \diff(Q)_{\weak}$ is assumed continuous, we know that there is some open subset $D \subset |\Delta^n|$ containing $s$ such that for all $s' \in D$, the graph of $f_{s'}$ may be represented as the graph of some smooth section $F_{s'}: Q \to TQ$ (where the graph of $f_s$ is the zero section). By the usual Whitney approximation theorem, we may choose a continuous homotopy from the continuous map $s' \mapsto F_{s'}$ to a smooth map $s' \mapsto G_{s'}$, while maintaining that the induced maps 
	\eqn\label{eqn. g s' diff}
	K \to Q,
	\qquad
	x \mapsto g_{s'}(x)
	\eqnd
are all $\epsilon$-close to the original $f_s|_{K}$. The closeness of the first derivatives guarantees that the $g_{s'}$ are all immersions (hence locally injective), while $C^0$-closeness guarantees that each $g_{s'}$ is globally injective; hence by compactness of $Q$, each $g_{s'}$ is a diffeomorphism. (Note that if $Q$ is not compact, we cannot guarantee to be open unless we are given that $|\Delta^n| \to \diff(Q)$ is continuous in the {\em strong} topology.)

So given $f: |\Delta^n| \to \diff(Q)_{\weak}$, choose once and for all a very fine subdivision of $|\Delta^n|$ so that for every subsimplex of the subdivision, we may find some $(K,U,F,\phi,\psi,\epsilon)$ as above such that $K$ is a compact ball of $|\Delta^n|$ containing the subsimplex. (By compactness of $|\Delta^n|$, we may find a {\em finite} such subdivision.) We proceed by induction on the dimensions of these subsimplices---at the $d$th step, we will perform a construction on all $d$-dimensional subsimplices. There is nothing to perform in dimension $d=0$. Now suppose we have a continuous map $j$ from a $d$-dimensional subsimplex of $|\Delta^n|$ to $\diff(Q)_{\weak}$. We may homotope $j$ so that $j$ is {\em collared} in the following sense. Writing $|\Delta^d|$ as the cone over $\del |\Delta^d|$ with cone point given by the barycenter, we have a continuous embedding $\del |\Delta^d| \times [0,1) \into |\Delta^d|$. We say $j$ is collared if for some $\delta>0$, $j$ is constant along $[0,\delta)$. (For example: If we further assume $j$ is collared when restricted to every subsimplex of $\del |\Delta^d|$, we in particular see that $j$ is constant in an open neighborhood of every vertex.) Note that we may always continuously homotope $j$ so that it is collared---for example, by deformation retracting $[0,\delta)$ to $\{0\}$---and without changing the value of $j$ along $\del|\Delta^d|$. Note that because our subdivision was fine enough, for every $s' \in |\Delta^d|$, $j_{s'}$ may be modeled as the graph of some section $G: |\Delta^d| \to TQ$. The homotoped $j$ retains this property.

So now homotope the continuous family of smooth functions $s' \mapsto J_{s'}$ to a smooth family of smooth functions $s' \mapsto G_{s'}$ as in \eqref{eqn. g s'}. By usual Whitney approximation, we can arrange so that $G_{s'} = J_{s'}$ in a neighborhood of $\del |\Delta^d|$ (where $J_{s'}$ is already smooth by the inductive hypothesis and the collaring assumption). In particular, if a subsimplex $|\Delta^d|$ intersects any boundary face of $|\Delta^n|$, we maintain that the associated family of diffeomorphisms is constant along that intersection.
 This determines smooth maps $g: |\Delta^d| \to \widehat{\diff(Q)}, s \mapsto g_s$, and completes the inductive step.

Gluing together the homotopies performed along steps $d=0$ to $d=n$, we obtain a homotopy $[0,1] \times |\Delta^n| \to \diff(Q)$ such that
\enum
\item The time-0 map $|\Delta^n| \cong \{0\} \times |\Delta^n| \to \diff(Q)$ is the original $f$ we began with,
\item The time-1 map $g: |\Delta^n| \cong \{1\} \times |\Delta^n| \to \diff(Q)$ has adjoint map $|\Delta^n| \times Q \to Q$ which is smooth along every subsimplex of the subdivision. Moreover, the collaring condition guarantees that $g$ is smooth along neighborhoods of any subsimplex, hence globally smooth. Finally, not only have we arranged that $g$ equals $f_0$ along $\del |\Delta^n|$, the collaring condition along $\del |\Delta^n|$ guarantees that $g$ is constantly equal to $f_0$ in some neighborhood of $\del |\Delta^n|$, hence we may extend $g$ (constantly, for example) to a smooth map $|\Delta^n_e| \to \widehat{\diff(Q)}$. 
\enumd
Now we note that, as usual, the continuous map $[0,1] \times |\Delta^n| \to \diff(Q)_{\weak}$ factors through the quotient map $[0,1] \times |\Delta^n| \to |\Delta^{n+1}|$, exhibiting the simplicial homotopy from $f$ to $g$. This completes the proof that the map on $\pi_n$ is a surjection.

A similar construction shows that, for any null-homotopy of $g$ in $\sing(\diff(Q)_{\weak})$, we may exhibit a null-homotopy in $\sing^{C^\infty}(\widehat{\diff(Q)})$. This shows that the map on $\pi_n$ is an injection as well.

Finally, since both simplicial sets are Kan complexes, we are finished.
\end{proof}

\begin{prop}
Let $Q$ be a compact smooth manifold.
Let $G=\diff(Q)$ be given the diffeology from Example~\ref{example. diff groups and their subgroups}, and the topology from Proposition~\ref{prop. topologies on Diff Q equivalent}. Then $\diff(Q)$ satisfies Assumption~\ref{assumption on G}.
\end{prop}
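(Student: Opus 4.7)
The plan is to verify each of the three clauses \ref{item. G is group}, \ref{item. smooth simplices are continuous}, \ref{item. smooth pi n are pi n} of Assumption~\ref{assumption on G} in turn, invoking the results already established earlier in this section. Clause \ref{item. G is group} is immediate: $\diff(Q)_{\weak}$ is a topological group and $\widehat{\diff(Q)}$ is a diffeological group by Proposition~\ref{prop. diff Q is a group}. Clause \ref{item. smooth pi n are pi n}, which is the substantive one, is exactly the content of Proposition~\ref{prop. diff q smooth approximation}.

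What remains is clause \ref{item. smooth simplices are continuous}. Here the plan is to unwind the definition of the diffeology on $\diff(Q)$: a smooth map $j \colon \widehat{|\Delta_e^n|} \to \widehat{\diff(Q)}$ is by Example~\ref{example. diff groups and their subgroups} precisely a function whose adjoint $(s,x) \mapsto j(s)(x)$ from $|\Delta_e^n| \times Q \to Q$ is smooth (as is the adjoint of the inverse). Restricting $s$ to the compact simplex $|\Delta^n| \subset |\Delta_e^n|$ yields a smooth, hence continuous, adjoint map $|\Delta^n| \times Q \to Q$. By the characterization of the weak topology on $C^\infty(Q,Q)$, this is equivalent to continuity of $|\Delta^n| \to C^\infty(Q,Q)_{\weak}$. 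Combining with the analogous statement for the inverse, the restriction $|\Delta^n| \to \diff(Q)_{\weak}$ is continuous. Alternatively, this is simply an instance of the map \eqref{eqn. sing Coo to sing} of Remark~\ref{remark. D top maps to weak} applied to the pair $(Q,Q)$, intersected with the subspace $\diff(Q) \subset C^\infty(Q,Q)$ and using Proposition~\ref{prop. diff X has one diffeology} to identify the two diffeologies on $\diff(Q)$.

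In short, all three conditions have been reduced to results already in place, so there is no genuine obstacle remaining. The only non-trivial content is \ref{item. smooth pi n are pi n}, which rests on the Whitney-type smooth approximation argument in the proof of Proposition~\ref{prop. diff q smooth approximation}; everything else is definitional.
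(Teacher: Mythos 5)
Your proposal is correct and follows the same structure as the paper's proof: (A1) from Proposition~\ref{prop. diff Q is a group}, (A3) from Proposition~\ref{prop. diff q smooth approximation}, and (A2) from definitional unwinding. The paper dispatches (A2) by simply noting that the chosen topology (via Proposition~\ref{prop. topologies on Diff Q equivalent}) is the $D$-topology, for which smooth maps are continuous by definition; your more explicit route through the adjoint characterization of smoothness and the weak topology (equivalently, Remark~\ref{remark. D top maps to weak}) spells out the same fact.
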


\begin{proof}
\ref{item. G is group} follows from Proposition~\ref{prop. diff Q is a group}.
\ref{item. smooth simplices are continuous} follows from the definition of the D-topology.
\ref{item. smooth pi n are pi n} is Proposition~\ref{prop. diff q smooth approximation}. 
\end{proof}

\section{Example: Liouville automorphisms}\label{section. liouville}

\subsection{Recollections}
By a {\em Liouville sector}, we mean a smooth manifold\footnote{From the definitions, it follows that $M$ must be non-compact if $\dim M >0$.} $M$ possibly with boundary, equipped with a smooth 1-form $\theta$ satisfying the following:
\enum
\item $d\theta$ is a symplectic form,
\item The boundary of $M$ defines a barrier, and
\item\label{item. liouville flow conical} The Liouville flow (i.e., the flow associated to the vector field symplectically dual to $\theta$) exhibits $M$ as a completion of a Liouville domain with convex boundary.
\enumd
We refer to 2.4 of~\cite{oh-tanaka-liouville-bundles}, and to~\cite{gps} for more details. In practice, we are only interested in an equivalence class of $\theta$---we declare $\theta$ and $\theta'$ to be equivalent if their difference is the de Rham derivative of a compactly supported smooth function on $M$.

\begin{defn}\label{defn. liouaut}
By a Liouville {\em automorphism}, we mean a smooth map $\phi: M \to M$ such that $\phi^*(\theta)$ is in the same equivalence class as $\theta$; that is, $\phi^*\theta - \theta = df$ for some compactly supported $f: M \to \RR$.
\end{defn}

\begin{remark}
Because $\omega = d\theta$ is a symplectic form, the equation $\theta = \omega(X_\theta,-)$ uniquely defines a vector field $X_\theta$ on $M$. This is often called the {\em Liouville vector field}, and its flow the {\em Liouville flow}. The condition \ref{item. liouville flow conical} above means the following: 
We can find a compact, codimension-zero submanifold $M^0 \subset M$ with boundary, for which
	\eqnn
	C = \overline{\del M^0 \setminus (M^0 \cap \del M)} \subset M
	\eqnd
is a contact manifold (possibly with boundary), and for which the Liouville flow of $M$ realizes $M$ as a union
	\eqn\label{eqn. conical flow}
	M = M^0 \bigcup_{C} \left(C \times \RR_{\geq 0}\right).
	\eqnd
(Here, $\RR_{\geq 0}$ parametrizes the Liouville flow.)
\end{remark}

\begin{remark}\label{remark. Liouville flow equivariant}
Note that any Liouville automorphism $\phi$ is equivariant with respect to the flow of $X_\phi$ outside of some compact set. 
Thus, Liouville automorphisms represent an automorphism group of a non-compact manifold still having some control ``near infinity.'' 
\end{remark}

Indeed, an alternative construction of the contact manifold $C$ is to consider only the geometry at $\infty$ of $M$---this is Giroux's construction of the ideal contact boundary. More details can be found in~\cite{giroux}.

\begin{notation}
We denote Giroux's ideal contact boundary by
	\eqnn
	\del_\infty M.
	\eqnd
\end{notation}
We also note that there is natural map
	\eqn\label{eqn. restriction to infinity}
	\Liouauto(M) \to \aut(\del_\infty M)
	\eqnd
from the set of Liouville automorphisms of $M$ to the set of contact diffeomorphisms of $\del_\infty M$.

\subsection{The topology and diffeology on Liouville automorphisms}
\begin{defn}\label{defn. Liouaut diffeology}
Fix a Liouville sector $M$. We let $\Liouauto(M)$ be the collection of Liouville automorphisms $\phi: M \to M$.

We topologize $\Liouauto(M)$ to have the coarsest topology for which:
\enum
\item $\Liouauto(M)$ contains the weak $C^\infty$ topology inherited from $C^\infty(M,M)$, and
\item The map \eqref{eqn. restriction to infinity} is continuous.
\enumd

Further, we let $\widehat{\Liouauto(M)}$ be the diffeological space with diffeology generated by the following requirements:
\enum
\item It contains the subset diffeology inherited from $C^\infty(M,M)$, and
\item The map \eqref{eqn. restriction to infinity} is smooth.
\enumd
For this last condition, we are endowing the collection of contact automorphsims $\aut(\del_\infty M)$ with the smooth diffeology restricted from $C^\infty(\del_\infty M, \del_\infty M)$.
\end{defn}

\begin{remark}\label{remark. Liou has a unique diffeology}
By the exact same proof as in Proposition~\ref{prop. diff X has one diffeology}, one sees that this diffeology renders $\widehat{\Liouauto(M)}$ a diffeological group---if $\phi: U \times M \to M$ is smooth, then so is the map $(u,x) \mapsto \phi_u^{-1}(x)$. Likewise, the codomain of \eqref{eqn. restriction to infinity}, considered as a diffeological space $\widehat{\aut(\del_\infty M)}$, is a diffeological group.
\end{remark}

Now let us discuss some topologies on the space of Liouville automorphisms.

\begin{notation}
Let $C^\infty(M,M)$ be the collection of smooth endomorphisms. By the weak topology on $\Liouauto(M)$, we mean the subset topology endowed from the weak topology on $C^\infty(M,M)$; and likewise for the strong topology. Let us denote these two topological spaces
	\eqnn
	\Liouauto(M)_{\weak}
	\qquad
	\text{and}
	\qquad
	\Liouauto(M)_{\strong}.
	\eqnd
\end{notation}

\begin{remark}
As usual, the weak topology has far fewer open sets, and the identity map  $\Liouauto(M)_{\strong} \to  \Liouauto(M)_{\weak}$ is continuous.

Moreover, there are plenty of continuous maps $j: S \to \Liouauto(M)_{\weak}$ that are not continuous in the strong topology. As an example, write $M = M^0 \bigcup (C \times \RR_{\geq 0})$ as in \eqref{eqn. conical flow}, and choose a point $x \in C$. Now choose the collection of compact sets $K_n = \{x\} \times \{n\} \subset C \times \RR_{\geq 0}$ and the real numbers $\epsilon_n = 1/n$. Some appropriate choices of charts on $M$ then define a basic open neighborhood $N$ of $\Liouauto(M)_{\strong}$. Because any $f \in \Liouaut(M)$ is $\RR_{\geq 0}$-equivariant outside a compact subset of $M$, one can choose a strong-topology neighborhood $N$ of $f$ to consist precisely of those $g$ in a weak-topology basic neighborhood for which $f$ and $g$ agree to all orders along $\{x\} \times [t_0,\infty) \subset M$ for $t_0=t_0(g)$ large enough. It is now easy to construct examples of continuous maps $j: S \to \Liouauto(M)_{\weak}$ for which $j^{-1}(N)$ is then not an open subset of $S$; for example, when $S = [0,1]$.
\end{remark}

\begin{prop}\label{prop. Liou is top group}
$\Liouauto(M)_{\weak}$ and $\Liouauto(M)_{\strong}$ are topological groups.
\end{prop}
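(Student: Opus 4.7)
The plan is to establish the proposition by reducing it to the analogous classical statement that $\diff(M)$ is a topological group in both the weak and strong $C^\infty$ topologies, and then restricting to $\Liouauto(M)$ as a subgroup with the subspace topology. Under this reduction, the proof splits into two pieces: (i) verifying that $\Liouauto(M)$ is closed under composition and inversion as a subset of $\diff(M)$, and (ii) quoting (or sketching) the classical fact about $\diff(M)$.

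For (i), I would first verify closure under composition by a direct calculation with the cocycle relation: if $\phi^*\theta - \theta = df_\phi$ and $\psi^*\theta - \theta = df_\psi$ with $f_\phi, f_\psi$ compactly supported, then $(\phi\circ\psi)^*\theta - \theta = d(f_\psi + f_\phi\circ\psi)$, and $f_\psi + f_\phi\circ\psi$ is compactly supported since $\psi$ is a diffeomorphism and hence $\psi^{-1}(\supp f_\phi)$ is compact. Closure under inverse follows from the same identity applied to $\phi^{-1}$, yielding $f_{\phi^{-1}} = -f_\phi \circ \phi^{-1}$, again compactly supported. This realises $\Liouauto(M)$ as an abstract subgroup of $\diff(M)$.

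For (ii), the continuity of composition and inversion on $\Liouauto(M)_{\weak}$ is an immediate consequence once we know that $\diff(M)_{\weak}$ is a topological group, since the subspace topology is compatible with subgroups. For the weak topology, a basic neighbourhood of $\phi\circ\psi$ is cut out by a $C^r$-bound on a compact set $K \subset M$; by the chain rule, these $C^r$-norms are controlled by the $C^r$-norms of $\phi$ on $\psi(K)$ and of $\psi$ on $K$, so suitable product neighbourhoods of $(\phi,\psi)$ suffice. Continuity of inversion is handled in the same spirit, using the inverse function theorem (applied uniformly over the compact set) to bound the $C^r$-norm of $\phi'^{-1}-\phi^{-1}$ on $K$ in terms of the $C^r$-norm of $\phi'-\phi$ on a slightly larger compact set. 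This is precisely the sketch invoked in Proposition~\ref{prop. diff Q is a group}.

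The strong topology requires only a mild upgrade: its basic neighbourhoods are indexed by a locally finite countable family $(K_n,\epsilon_n,r_n)$ rather than a single $(K,\epsilon,r)$. Since each point of $M$ meets only finitely many $K_n$, we can run the weak-topology argument on each $K_n$ separately and intersect the resulting neighbourhoods of $\phi$ and $\psi$ to obtain the required open set. I expect the main obstacle to be purely book-keeping---there is no new phenomenon beyond the classical one---so the proof should end with the observation that the subspace topology on a subgroup of a topological group remains a topological group topology.
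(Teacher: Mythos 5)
Your proposal is correct and matches the paper's approach exactly: the paper's proof is the one-line observation that $\Liouauto(M)\subset\diff(M)$ is a subgroup, so the subspace topology inherits the topological-group structure from $\diff(M)$ (Proposition~\ref{prop. diff Q is a group}). You are a bit more thorough in spelling out the cocycle calculation showing $\Liouauto(M)$ is a subgroup (which the paper takes for granted) and in sketching the $\diff(M)$ argument, but the strategy is identical.
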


\begin{proof}
For any smooth manifold $M$, $\diff(M)$ is always a topological group in either topology (Proposition~\ref{prop. diff Q is a group}). Because $\Liouauto(M) \subset \diff(M)$ is a subgroup, the definition of subset topology finishes the proof.
\end{proof}

\begin{remark}
Let us explain why we prefer to invoke $\Liouauto(M)_{\weak}$ (rather than the strong topology) in Definition~\ref{defn. Liouaut diffeology}. This is somewhat motivated by the covenient map $\Liouauto(M)_{D} \to \Liouauto(M)_{\weak}$ from the $D$-topology (in contrast, the strong topology maps out, into the $D$-topology), but is primarily motivated by the natural map
	\eqn\label{eqn. diff to Liou of cotangent}
	\diff(Q) \to \Liouauto(T^*Q)
	\eqnd
which sends a diffeomorphism of $Q$ to the induced exact symplectomorphism of the cotangent bundle $T^*Q$. (Here, a symplectomorphism $\phi$ is called exact if $\phi^*\theta = \theta$.) For sake of argument, suppose $Q$ is compact, so that the natural topologies on $\diff(Q)$ are all equivalent (Proposition~\ref{prop. topologies on Diff Q equivalent}). Then \eqref{eqn. diff to Liou of cotangent} is {\em not} continuous if the codomain is given the strong topology.

(For example, choose a neighborhood of $\id_{T^*Q} \in \Liouauto(T^*Q)_{\strong}$ determined in part by a sequence of compact subsets $K_n$, each farther away from the zero section, with parameter $\epsilon_n = 1/n$. The only diffeomorphism whose induced symplectomorphism has derivatives $\epsilon_n$-close to $\id_{T^*Q}$ along each $K_n$ is the identity diffeomorphism; but the singleton set $\{\id_Q\}$ is not open in $\diff(Q)$. This argument in fact shows that, for $Q$ compact or not, $\diff(Q)$ must be given the discrete topology if the map $\diff(Q) \to \Liouauto(T^*Q)_{\strong}$ is to be continuous.)

Long story short, the map~\eqref{eqn. restriction of simplices} is well-behaved for the topology and diffeology of Definition~\ref{defn. Liouaut diffeology}.
\end{remark}

Now we are in line to prove that \eqref{eqn. restriction of simplices} is a homotopy equivalence.

\begin{theorem}\label{thm. Liou smooth approx}
The map $\sing^{C^\infty}(\widehat{\Liouauto(M)})  \to \sing(\Liouauto(M)) $ induced by \eqref{eqn. restriction of simplices} is a homotopy equivalence of simplicial sets.
\end{theorem}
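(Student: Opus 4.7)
The plan is to mirror the argument of Proposition~\ref{prop. diff q smooth approximation}, now adapted to the Liouville setting. Both simplicial sets are Kan complexes---the source by Proposition~\ref{prop. Sing for diff group is Kan} combined with Remark~\ref{remark. Liou has a unique diffeology} (which identifies $\widehat{\Liouauto(M)}$ as a diffeological group), and the target as the singular complex of a topological space. Hence it suffices to show that the restriction map \eqref{eqn. restriction of simplices} is a bijection on all $\pi_n$, which we obtain by a smooth approximation argument for $\Liouauto(M)$-valued simplices. As in the $\diff(Q)$ case, the same method handles surjectivity (approximating a continuous simplex by a smooth one) and injectivity (approximating a null-homotopy), so I will describe only the surjectivity step.

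Given a continuous $f:|\Delta^n|\to \Liouauto(M)$ sending $\del|\Delta^n|$ to a basepoint $f_0$, the first step is to produce, inductively over a sufficiently fine subdivision of $|\Delta^n|$ and with collaring along each subsimplex, a smooth family $g:\widehat{|\Delta_e^n|}\to \diff(M)$ whose restriction to $|\Delta^n|$ is weak-$C^\infty$ close to $f$ and agrees with $f_0$ in a neighborhood of $\del|\Delta^n|$. Non-compactness of $M$ is controlled by Remark~\ref{remark. Liouville flow equivariant}: outside a compact domain $M^0\subset M$, a Liouville automorphism is equivariant under the Liouville flow and hence determined by the induced ideal contactomorphism. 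We therefore run the tubular-neighborhood plus Whitney approximation of Proposition~\ref{prop. diff q smooth approximation} in two coordinated pieces, one on $M^0$ and one on $\del_\infty M$, using that the diffeology and topology on $\Liouauto(M)$ of Definition~\ref{defn. Liouaut diffeology} are precisely the ones that make both pieces commute with \eqref{eqn. restriction to infinity}. The outcome of this inductive construction is a smooth family $g_s$ of diffeomorphisms of $M$, smooth also in the restriction to $\del_\infty M$, equal to $f_0$ in a collar of $\del|\Delta^n|$, and close to $f_s$ in the weak $C^\infty$ topology.

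The main obstacle---and the genuinely new content beyond Proposition~\ref{prop. diff q smooth approximation}---is that the $g_s$ produced this way are only diffeomorphisms, not necessarily Liouville automorphisms in the sense of Definition~\ref{defn. liouaut}. To correct this I propose a two-step Moser-style adjustment, carried out smoothly and equivariantly in $s$, and arranged to be compactly supported inside $M$ and trivial near $\del|\Delta^n|$ (where $g_s=f_0$ is already Liouville). First, interpolate between $\omega$ and $g_s^*\omega$ through symplectic forms and integrate the resulting time-dependent vector fields to produce a smooth family $h_s$ of compactly supported diffeomorphisms with $g_s\circ h_s$ an exact symplectomorphism. Second, the closed 1-form $(g_s\circ h_s)^*\theta-\theta$ is compactly supported and, by $C^1$-closeness to $f_s^*\theta-\theta=df_s$, cohomologous to an exact form; a further compactly supported Hamiltonian isotopy corrects it to land in $\Liouauto(M)$. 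Smoothness in $s$ of each correction follows from smooth dependence of solutions to ODEs, and both corrections preserve the behavior at infinity (hence remain smooth under \eqref{eqn. restriction to infinity}). Gluing this homotopy onto the homotopy from $f$ to $g$ via the collar yields the desired simplicial homotopy into $\sing^{C^\infty}(\widehat{\Liouauto(M)})$.

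The hardest point will be keeping the Moser and Hamiltonian corrections simultaneously (i) compactly supported in $M$, so that they do not interfere with the conical structure at infinity and the contactomorphism data on $\del_\infty M$ that we have already smoothed, and (ii) smoothly dependent on the simplex parameter $s$ together with smooth dependence of the restriction to $\del_\infty M$; this should be achievable because the defect $g_s^*\theta-\theta$ is itself compactly supported modulo terms controlled by $\del_\infty g_s$, but it will require care to set up in the inductive/collaring framework.
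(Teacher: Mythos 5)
Your proposal takes a genuinely different route from the paper, and it has a gap that I do not see how to close as written.

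\textbf{The paper's route.} The paper does not attempt a direct smooth approximation of a family in $\Liouauto(M)$. Instead it invokes Proposition~7 of Giroux's paper, which asserts that the restriction map $\Liouauto(M) \to \aut(\del_\infty M)$ of \eqref{eqn. restriction to infinity} is a Serre fibration, and checks that the analogous maps of $\sing^{C^\infty}$ and $\sing$ are Kan fibrations compatible with the restriction maps of \eqref{eqn. restriction of simplices}. It then identifies the fiber $F$ as the group of \emph{compactly supported} Liouville automorphisms, runs a $\diff(Q)$-style approximation separately for $F$ (which is effectively supported in a compact set) and for the base $\aut(\del_\infty M)$ (which lives on a closed manifold), and concludes by the five lemma applied to the long exact sequences of the two Kan fibration sequences. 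This modularization is exactly what lets the paper outsource the delicate Moser/Gray-stability corrections to the cited Giroux fibration result, rather than performing them inside the approximation argument.

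\textbf{The gap in your proposal.} The second Moser/Hamiltonian correction step is not justified. You produce an exact symplectomorphism $g_s \circ h_s$, conical at infinity, and you need to upgrade it to a Liouville automorphism in the sense of Definition~\ref{defn. liouaut}, i.e.\ to arrange that $(g_s\circ h_s)^*\theta - \theta = d(\text{compactly supported function})$. You observe that this $1$-form is closed with compact support, and assert that it is exact because it is $C^1$-close to the exact form $f_s^*\theta - \theta$. That implication is false in general: the class of a compactly supported closed $1$-form lives in $H^1_c(M)$, and $H^1_c(M)$ is typically a nonzero real vector space (already $H^1_c(T^*S^1) \cong \RR$), so being $C^1$-close to an exact form forces the class to be \emph{small} but not \emph{zero}. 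Worse, a further compactly supported Hamiltonian isotopy cannot repair this: if $\psi$ is generated by a compactly supported Hamiltonian, then $\psi^*\theta - \theta$ is exact with compactly supported primitive, and $\psi^*$ acts trivially on $H^1_c(M)$, so composing with $\psi$ leaves the obstruction class of $(g_s\circ h_s)^*\theta - \theta$ in $H^1_c(M)$ unchanged. Thus the final correction you propose cannot in general land in $\Liouauto(M)$. To salvage the direct approach one would need to control this cohomological obstruction from the outset of the approximation (or argue it can be absorbed into the Moser primitive), which is precisely the kind of bookkeeping that the paper avoids by factoring through Giroux's fibration and the five lemma.

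A secondary concern, which you flag yourself, is the coherence of the Moser corrections with the inductive collaring scheme and with smoothness of the restriction to $\del_\infty M$; but the $H^1_c$ obstruction above is the more fundamental issue and would need to be resolved before the rest of the argument can go through.
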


\begin{proof}
By Proposition 7 of~\cite{giroux}, the map
	\eqnn
	\Liouauto(M) \to \aut(\del_\infty M)
	\eqnd
is a Serre fibration. 

A similar argument as in loc. cit. shows that the induced map
	\eqnn
	\sing(\Liouauto(M)) \to \sing(\aut(\del_\infty M))
	\eqnd
is a Kan fibration of simplicial sets. With a straightforward verification of compatibility with diffeological structures, one also sees that the map
	\eqnn
	\sing^{C^\infty}(\widehat{\Liouauto(M)}) \to 
	\sing^{C^\infty}(\widehat{\aut(\del_\infty M)})
	\eqnd
is a Kan fibration of simplicial sets. 

Let $F$ denote the fiber of the map $\Liouauto(M) \to \aut(\del_\infty M)$. $F$ consists of those Liouville automorphisms that induce the identity map on $\del_\infty M$. Because Liouville automorphisms are, outside of a compact set, equivariant with respect to the Liouville flow (Remark~\ref{remark. Liouville flow equivariant}), we conclude that $F$ consists of {\em compactly supported} Liouville automorphisms of $M$.

Giving $F$ the induced topology and diffeology, we see (by a straightforward argument similar to the one in Proposition~\ref{prop. diff q smooth approximation} and using that $|\Delta^n|$ is compact) that the map
	\eqnn
	\sing^{C^\infty}(\widehat{F}) \to 
	\sing(F)
	\eqnd
is a weak homotopy equivalence of simplicial sets. The same result holds for the map
	\eqnn
	\sing^{C^\infty}(\widehat{\aut(\del_\infty M)}) \to \sing(\aut(\del_\infty M)).
	\eqnd
Thus, by the long exact sequence of homotopy groups associated to a Kan fibration, the result follows.
\end{proof}

\begin{prop}
Let $G=\Liouauto(M)$ be endowed with the weak topology, and the diffeology from Definition~\ref{defn. Liouaut diffeology}. Then $\Liouauto(M)$ satisfies Assumption~\ref{assumption on G}.
\end{prop}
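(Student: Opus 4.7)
The plan is to verify the three parts of Assumption~\ref{assumption on G} in sequence, each by citing a result already established in this section; the statement is essentially a bookkeeping aggregation once Theorem~\ref{thm. Liou smooth approx} is in hand.

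For \ref{item. G is group}, I would combine Proposition~\ref{prop. Liou is top group}, which exhibits $\Liouauto(M)_{\weak}$ as a topological group, with Remark~\ref{remark. Liou has a unique diffeology}, which shows that the diffeology of Definition~\ref{defn. Liouaut diffeology} makes $\widehat{\Liouauto(M)}$ a diffeological group. A minor point to address is that the topology of Definition~\ref{defn. Liouaut diffeology} is a priori finer than the plain weak topology, because one additionally forces \eqref{eqn. restriction to infinity} to be continuous; this only enhances the topology by pulling back open sets from the topological group $\aut(\del_\infty M)_{\weak}$ along a group homomorphism, so the resulting topology remains that of a topological group.

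For \ref{item. smooth simplices are continuous}, I would unpack the two generators of the diffeology. Given a smooth plot $j: \widehat{|\Delta_e^n|} \to \widehat{\Liouauto(M)}$, the adjoint $|\Delta_e^n| \times M \to M$ is smooth, so the restriction of $j$ to the compact simplex $|\Delta^n|$ is continuous into $C^\infty(M,M)_{\weak}$ by the standard observation of Remark~\ref{remark. D top maps to weak}. Similarly, the composite of $j$ with \eqref{eqn. restriction to infinity} is a smooth plot into $\widehat{\aut(\del_\infty M)}$, whose restriction to $|\Delta^n|$ is continuous into $\aut(\del_\infty M)_{\weak}$ by the same reasoning. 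Because the topology in Definition~\ref{defn. Liouaut diffeology} is by construction the coarsest making both of these data continuous, $j|_{|\Delta^n|}$ is continuous into $\Liouauto(M)$. Finally, \ref{item. smooth pi n are pi n} is exactly the content of Theorem~\ref{thm. Liou smooth approx}, already proved above.

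The only genuinely substantive step is Theorem~\ref{thm. Liou smooth approx} itself, which required Giroux's Serre-fibration input, a Whitney-type approximation on the compactly-supported fiber analogous to Proposition~\ref{prop. diff q smooth approximation}, and approximation on the contact base $\aut(\del_\infty M)$. That work is complete, so I expect no real obstacle here beyond being careful about which universal property of the topology on $\Liouauto(M)$ is being invoked when verifying \ref{item. smooth simplices are continuous}.
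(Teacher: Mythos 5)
Your proposal is correct and follows essentially the same route as the paper: cite Proposition~\ref{prop. Liou is top group} together with Remark~\ref{remark. Liou has a unique diffeology} for~\ref{item. G is group}, observe via Remark~\ref{remark. D top maps to weak} (and unpacking the diffeology's generators) that restrictions of smooth plots are continuous for~\ref{item. smooth simplices are continuous}, and invoke Theorem~\ref{thm. Liou smooth approx} for~\ref{item. smooth pi n are pi n}. Your added remark that the Definition~\ref{defn. Liouaut diffeology} topology is a priori finer than the plain weak topology---yet still a topological group since it is generated by pulling back opens along a group homomorphism to $\aut(\del_\infty M)_{\weak}$---is a useful clarification that the paper leaves implicit.
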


\begin{proof}
\ref{item. G is group} follows from Proposition~\ref{prop. Liou is top group} and Remark~\ref{remark. Liou has a unique diffeology}.
\ref{item. smooth simplices are continuous} follows from the definition of the D-topology and Remark~\ref{remark. D top maps to weak}.
\ref{item. smooth pi n are pi n} is Theorem~\ref{thm. Liou smooth approx}.
\end{proof}

\section{Localization}

\begin{notation}[$\simp(\widehat{B})$]\label{notation. simp C oo}
Let $\widehat{B}$ be any diffeological space. We let $\simp(\widehat{B})$ denote the category of smooth, extended simplices of $\widehat{B}$. That is, an object of $\simp(\widehat{B})$ is the data of a smooth map $j: \widehat{|\Delta_e^k|} \to \widehat{B}$. (See Definition~\ref{defn. extended simplices} for $|\Delta_e^k|$.) A morphism is a commutative diagram
	\eqnn
	\xymatrix{
	\widehat{|\Delta_e^k|} \ar[rr] \ar[dr]_{j} && \widehat{|\Delta^{k'}_e|} \ar[dl]^{j'} \\
	& \widehat{B}
	}
	\eqnd
where the map $\widehat{|\Delta_e^k|} \to \widehat{|\Delta^{k'}_e|}$ is (induced by) an injective, order-preserving simplicial map.

Equivalently, let $(\Delta_{\inj})_{/\sing^{C^\infty}(\widehat{B})}$ be the slice category over $\sing^{C^\infty}(\widehat{B})$. Then
\eqnn
\simp(\widehat{B}) \cong (\Delta_{\inj})_{/\sing^{C^\infty}(\widehat{B})} .
\eqnd
\end{notation}

\begin{notation}[$\subdivision(X)$]\label{notation. subdivision}
More generally, if $X$ is any simplicial set, we let $\subdivision(X)$ denote the barycentric subdivision of $X$. This can be realized as follows: There exists a category of simplices of $X$, where an object is a map $f: \Delta^n \to X$ of simplicial sets, and a morphism from $f$ to $f'$ is a commutative diagram of simplicial sets
	\eqnn
	\xymatrix{
	\Delta^n \ar[rr] \ar[dr] && \Delta^{n'} \ar[dl] \\
	& X &
	}
	\eqnd
where $\Delta^n \to \Delta^{n'}$ is induced by an injection $[n] \to [n']$.

Equivalently, let $(\Delta_{\inj})_{/X}$ be the slice category over $X$. Then $\subdivision(X)$ is the nerve of this category:
\eqn\label{eq:subdivision}
\subdivision(X) = N((\Delta_{\inj})_{/X})
\eqnd

\end{notation}

\begin{remark}\label{remark. subdiv is simp}
We have  a natural isomorphism of simplicial sets
	\eqnn
	N(\simp(\widehat{B})) \cong \subdivision(\sing^{C^\infty}(\widehat{B})).
	\eqnd
\end{remark}

\subsection{Realizations of subdivisions}	
	The following lemma illustrates one power of localization: It turns (the nerve of) a strict category into a homotopically rich object.

	\begin{lemma}\label{lemma. kan completion is smooth sing}
	Let $X$ be a Kan complex. Then the Kan completion of $\subdivision(X)$ is homotopy equivalent to $X$.
	\end{lemma}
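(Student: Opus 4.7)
The plan is to construct a natural ``last vertex'' map $\phi: \subdivision(X) \to X$ and to show it is a weak homotopy equivalence of simplicial sets. Since $X$ is already a Kan complex and Kan completion preserves weak equivalences, this will produce a Kan complex weakly equivalent to $X$; Whitehead's theorem for Kan complexes then upgrades the weak equivalence into an honest homotopy equivalence, yielding the lemma.

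To build $\phi$, recall that an $n$-simplex of $\subdivision(X) = N((\Delta_{\inj})_{/X})$ is a chain
\[
\sigma_0 \to \sigma_1 \to \cdots \to \sigma_n
\]
in which $\sigma_i: \Delta^{k_i} \to X$, and each arrow is induced by an order-preserving injection $\iota_i: [k_{i-1}] \into [k_i]$ satisfying $\sigma_i \circ \iota_i = \sigma_{i-1}$. Define $\phi$ to send this chain to the composition $\Delta^n \to \Delta^{k_n} \xrightarrow{\sigma_n} X$, where the first arrow is induced by the map of posets $[n] \to [k_n]$ sending $i \in [n]$ to the image of $\max [k_i]$ under the composite injection $[k_i] \into [k_n]$. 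Compatibility with the simplicial face and degeneracy operators is a direct verification, so $\phi$ is a well-defined map of simplicial sets.

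To show $\phi$ is a weak equivalence, I would reduce to the case $X = \Delta^m$ by induction on the skeletal filtration of $X$. For $X = \Delta^m$ itself, both $\Delta^m$ and $\subdivision(\Delta^m)$ are contractible---the latter because $(\Delta_{\inj})_{/\Delta^m}$ has the terminal object $\id_{\Delta^m}$, so its nerve is contractible as the nerve of a category with terminal object---so $\phi$ is trivially a weak equivalence in the base case. The inductive step attaches a simplex along a boundary inclusion $\del \Delta^m \into \Delta^m$; using that $\subdivision$ turns this pushout into a homotopy pushout (left-properness of Kan--Quillen together with the fact that the induced maps on slice categories yield cofibrations after taking nerves), one glues the weak equivalences together to conclude.

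The main obstacle lies in this inductive step, because the paper's variant of $\subdivision$ restricts to injective transition maps in the slice category, and one must verify that homotopy pushouts are preserved in this semi-simplicial flavor of subdivision. A cleaner alternative would be to invoke a Thomason-type theorem asserting that the classifying space of a suitable category of simplices of $X$ recovers $X$ up to weak equivalence---applied via Quillen's Theorem A to the last-vertex functor $(\Delta_{\inj})_{/X} \to X$ (viewed $\infty$-categorically after Kan completion), this would give the same conclusion with less bookkeeping. In either approach, combining the weak equivalence $\phi$ with the Kan hypothesis on $X$ produces the desired homotopy equivalence after Kan completion.
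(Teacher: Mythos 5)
Your first move---defining the last-vertex map and aiming to show it is a weak homotopy equivalence---is exactly the paper's second proof, where this map is called $\max$ and the weak equivalence is obtained by citing the classical fact that the last-vertex map on barycentric subdivisions induces a homotopy equivalence on geometric realizations (Goerss--Jardine III.4). Where your argument diverges is in \emph{how} you try to establish the weak equivalence, and there the proof has a real gap.

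The base-case claim that $\id_{\Delta^m}$ is terminal in $(\Delta_{\inj})_{/\Delta^m}$ is false. An object of this slice is an arbitrary order-preserving map $\sigma\colon [k]\to[m]$, not necessarily injective, while a morphism $\sigma\to\id_{\Delta^m}$ would be an injective order-preserving $\iota\colon[k]\into[m]$ with $\id\circ\iota=\sigma$, forcing $\iota=\sigma$. When $\sigma$ is not injective, no such $\iota$ exists, so there is no morphism to $\id_{\Delta^m}$ at all. (Your reasoning would be fine in the full slice $\Delta_{/\Delta^m}$, where non-injective transition maps are allowed; the paper's $\subdivision$ deliberately restricts to $\Delta_{\inj}$ to mirror $\simp(\widehat{B})$.) The conclusion that $\subdivision(\Delta^m)$ is weakly contractible is true, but not for the reason given, so the base of the induction is unsupported. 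Combined with the inductive-step gap you yourself flag---one would need to justify that this semi-simplicial $\subdivision$ carries the skeletal pushouts to homotopy pushouts---the skeletal-induction route is not a proof as written. The paper sidesteps all of this either by the Goerss--Jardine citation, or, in its first proof, by recognizing $\subdivision(X)^{\op}$ as the total space of the coCartesian fibration over $\Delta_{\inj}^{\op}$ classifying $X$ and identifying the localization along all edges with $\colim_{\Delta^{\op}} X\simeq X$. The Quillen Theorem A alternative you gesture at could work, but to make it precise you would need to explain what ``the last-vertex functor to $X$'' means for $X$ a simplicial set (not a category) and verify the relevant cofinality or fiber-contractibility hypothesis.
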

	
	We give two proofs. The second proof has the advantage that one sees an explicit map leading to the homotopy equivalence.
	
	\begin{proof}[Proof of Lemma~\ref{lemma. kan completion is smooth sing} using coCartesian fibrations.]	
	By construction \eqref{eq:subdivision}, $\subdivision(X)$ is the total space of a Cartesian fibration over $N(\Delta_{\inj})$ with discrete fibers; in particular, the opposite category is a coCartesian fibration over $\Delta^{\op}_{\inj}$.   This coCartesian fibration classifies the functor
		\eqnn
		\Delta^{\op} \to \sets \subset \inftyGpd \subset \inftyCat,
		\qquad
		[n] \mapsto \{[n] \to X\},
		\eqnd
	otherwise known as $X$. Recall that the colimit of a diagram of $\infty$-categories is computed by localizing the total space of the corresponding coCartesian fibration along coCartesian edges. Thus we have an equivalence of $\infty$-categories
		\eqnn
		\colim_{\Delta^{\op}} X
		\to
		\subdivision(X)[C^{-1}]		
		\eqnd
	where $C$ is the collection of coCartesian edges.
	Because the inclusion $\inftyGpd \subset \inftyCat$ admits a right adjoint, the colimit of this functor into $\inftyCat$ may be computed via the colimit in $\inftyGpd$, but this is of course the usual geometric realization because all of the relevant $\infty$-groupoids in the simplicial diagram are discrete. So the domain of the equivalence is homotopy equivalent to the singular complex of the geometric realization $X$; this is equivalent to $X$ because $X$ is a Kan complex.
		
	On the other hand, the localization on the righthand side is precisely the Kan completion of $\subdivision(X)^{\op}$ (because every edge is coCartesian), and hence the Kan completion of the non-opposite category.
	\end{proof}
	
Here is another proof.

\begin{notation}[$\max$]\label{notation. max}
Fix a simplicial set $X$. ($X$ need not be a Kan complex.) We denote by
	\eqnn
	\max: \subdivision(X) \to X
	\eqnd
the map of simplicial sets given by evaluating $j$ at the maximal vertex of $\Delta^n$.
\end{notation}

\begin{prop}\label{prop. barycentric is equivalent}
If $X$ is a Kan complex, $\max$ exhibits $X$ as the Kan completion of $\subdivision(X)$. More generally, even if $X$ is not a Kan complex, $\max$ is a weak homotopy equivalence, so the Kan completion of $X$ is homotopy equivalent to the Kan completion of $\subdivision(X)$.
\end{prop}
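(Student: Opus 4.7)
The plan is to reduce the main claim (that $\max: \subdivision(X) \to X$ is a weak homotopy equivalence for any simplicial set $X$) to the representable case $X = \Delta^n$ via skeletal induction, and then to handle the representable case by directly contracting $(\Delta_{\inj})_{/\Delta^n}$ via two natural transformations into a common shift functor. The Kan completion assertion follows from the weak equivalence statement because a weak equivalence into a fibrant object exhibits that fibrant object as a Kan completion (fibrant replacement in the Kan--Quillen model structure).

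For the reduction to representables, note that $\subdivision$ preserves all colimits and monomorphisms---this is visible from the formula $\subdivision(X)_k = \coprod X_{n_k}$ indexed over chains $[n_0] \hookrightarrow \cdots \hookrightarrow [n_k]$. Write $X = \colim_n \sk_n X$, where each inclusion $\sk_{n-1} X \hookrightarrow \sk_n X$ sits in a pushout square
\eqnn
\xymatrix{
\coprod_\sigma \del \Delta^n \ar[r] \ar[d] & \coprod_\sigma \Delta^n \ar[d] \\
\sk_{n-1} X \ar[r] & \sk_n X
}
\eqnd
over non-degenerate $n$-simplices $\sigma$. Applying $\subdivision$ yields another pushout with the top edge still a cofibration; the standard gluing lemma for weak equivalences then implies that $\max$ is a weak equivalence on $\sk_n X$ once it is on $\sk_{n-1} X$, $\coprod_\sigma \del \Delta^n$, and $\coprod_\sigma \Delta^n$. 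Since $\del \Delta^n = \sk_{n-1} \Delta^n$, an induction on dimension plus passage to the sequential colimit reduces everything to $X = \Delta^n$.

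For $X = \Delta^n$, the target is contractible (nerve of a poset with maximum), so I must show $N((\Delta_{\inj})_{/\Delta^n})$ is weakly contractible. Define an endofunctor $F$ of $(\Delta_{\inj})_{/\Delta^n}$ on objects by $F(j: [k] \to [n]) = \tilde j$, where $\tilde j: [k+1] \to [n]$ is the extension with $\tilde j(i) = j(i)$ for $i \leq k$ and $\tilde j(k+1) = n$, and on morphisms by extending an injection $\phi: [k] \hookrightarrow [k']$ by $k+1 \mapsto k'+1$. There are then two natural transformations to exhibit: $\eta: \id \Rightarrow F$, whose component at $j$ is the inclusion of $[k]$ into $[k+1]$ as the first $k+1$ vertices; and $\tau: c_{j_\top} \Rightarrow F$, where $c_{j_\top}$ is constant at the top-vertex simplex $j_\top: [0] \to [n]$, $0 \mapsto n$, and whose component at $j$ sends $[0]$ to the new top vertex $k+1$ of $[k+1]$. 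On nerves these yield simplicial homotopies $N(\id) \simeq N(F) \simeq N(c_{j_\top})$; since $N(c_{j_\top})$ factors through a point, the identity on $N((\Delta_{\inj})_{/\Delta^n})$ is null-homotopic, proving contractibility. The main obstacle is only the bookkeeping of ordinal injections in verifying functoriality of $F$ and naturality of $\eta, \tau$; everything else is standard.
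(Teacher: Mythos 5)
Your proof is correct, and it takes a genuinely different route from the paper's. The paper simply asserts that $|\max|\colon|\subdivision(X)|\to|X|$ is a homeomorphism and cites Goerss--Jardine III.4. That citation is for the classical barycentric subdivision $\sd(X)$ built from the posets of \emph{nondegenerate} faces of each $\Delta^n$; the paper's $\subdivision(X)=N\bigl((\Delta_{\inj})_{/X}\bigr)$ admits \emph{all} simplices (including degenerate ones) as objects, so the two do not agree as simplicial sets, and $|\max|$ is in general not a homeomorphism (already $\subdivision(\Delta^0)\cong N(\Delta_{\inj})$ is an infinite-dimensional contractible complex, not a point). The weak equivalence claim, which is all that is used downstream, is true, and your argument proves it cleanly without invoking the overstated homeomorphism: you observe that $\subdivision$ preserves colimits and monomorphisms from the levelwise formula, run skeletal induction with the gluing lemma (left properness of the Kan--Quillen structure) to reduce to the representable case, and then contract $(\Delta_{\inj})_{/\Delta^n}$ via the shift functor $F$ and the natural transformations $\eta\colon\id\Rightarrow F$ and $\tau\colon c_{j_\top}\Rightarrow F$, which I have checked are well-defined and natural. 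The one thing worth spelling out is the last step of the representable case: $\eta$ and $\tau$ give simplicial homotopies $|N(\id)|\simeq|N(F)|$ and $|N(c_{j_\top})|\simeq|N(F)|$ on realizations, which concatenate (in $\Top$, where homotopy is an equivalence relation) to show $\id_{|N((\Delta_{\inj})_{/\Delta^n})|}$ is null-homotopic. The tradeoff is the usual one: the paper's citation is shorter, but your self-contained argument is more robust and, incidentally, repairs the stated-but-unused homeomorphism claim.
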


\begin{proof}
It suffices to show that the induced map of geometric realizations $|\max|: |\subdivision(X)| \to |X|$ is a homotopy equivalence; in fact, it is a homeomorphism. This is a classical result, as $\subdivision(X)$ is nothing more than the barycentric subdivision of $X$. See for example~III.4 of~\cite{goerss-jardine}.
\end{proof}

	\begin{remark}\label{remark. localizing simp is sing usually}
	If $B$ is any topological space, one can analogously define the strict category $\simp(B)$ of continuous simplices in $B$. The proof of Lemma~\ref{lemma. kan completion is smooth sing} adapts straightforwardly to show that the Kan completion of $N(\simp(B))$ is homotopy equivalent to $\sing(B)$.
	\end{remark}

\begin{proof}[Proof of Lemma~\ref{lemma. kan completion is smooth sing} using $\max$ map.]
Immediate from Proposition~\ref{prop. barycentric is equivalent}.
\end{proof}

\begin{proof}[Proof of Theorem~\ref{main theorem}.]
For brevity, given a simplicial set $X$, we let $|X|$ denote its Kan completion. We have the string of maps
	\begin{align}
	N(\simp(\widehat{BG}))
	& \to \subdivision(\sing^{C^\infty}(\widehat{BG})) \nonumber\\
	& \to \sing^{C^\infty}(\widehat{BG})\nonumber\\
	& \to \sing(BG).\nonumber
	\end{align}
The first map is an isomorphism by Remark~\ref{remark. subdiv is simp}.
The next map exhibits $|\sing^{C^\infty}(\widehat{BG})|$ as a Kan completion of $\subdivision(\sing^{C^\infty}(\widehat{BG})) $ by Lemma~\ref{lemma. kan completion is smooth sing}.
The last map is a weak homotopy equivalence by Theorem~\ref{theorem. smooth and continuous homotopy groups BG}. This shows that $\sing(BG)$ is the Kan completion of $N(\simp(\widehat{BG}))$; on the other hand, the localization of an $\infty$-category along all its morphisms is precisely the Kan completion. 
\end{proof}

\bibliographystyle{amsalpha}
\bibliography{biblio}

\def\cprime{$'$}
\providecommand{\bysame}{\leavevmode\hbox to3em{\hrulefill}\thinspace}
\providecommand{\MR}{\relax\ifhmode\unskip\space\fi MR }
\providecommand{\MRhref}[2]{%
  \href{http://www.ams.org/mathscinet-getitem?mr=#1}{#2}
}
\providecommand{\href}[2]{#2}
\begin{thebibliography}{CSW14}

\bibitem[CSW14]{christensen-sinnamon-wu}
John~Daniel Christensen, Gordon Sinnamon, and Enxin Wu, \emph{The {D}-topology
  for diffeological spaces}, Pacific Journal of Mathematics \textbf{272}
  (2014), no.~1, 87--110.

\bibitem[CW14]{christensen-wu-homotopy-theory}
J~Daniel Christensen and Enxin Wu, \emph{The homotopy theory of diffeological
  spaces}, New York J. Math \textbf{20} (2014), 1269--1303.

\bibitem[CW17]{christensen-wu-smooth-classifying-spaces}
\bysame, \emph{Smooth classifying spaces}, arXiv:1709.10517, 2017.

\bibitem[Gir17]{giroux}
E.~Giroux, \emph{Ideal {L}iouville domains - a cool gadget}, arXiv:1708.08855,
  2017.

\bibitem[GJ09]{goerss-jardine}
Paul~G. Goerss and John~F. Jardine, \emph{Simplicial homotopy theory}, Modern
  Birkh\"auser Classics, Birkh\"auser Verlag, Basel, 2009, Reprint of the 1999
  edition [MR1711612]. \MR{2840650}

\bibitem[GPS17]{gps}
Sheel Gantra, John Pardon, and Vivek Shende, \emph{Covariantly functorial
  wrapped {F}loer theory on {L}iouville sectors}, arXiv:1706.03152, 2017.

\bibitem[HS13]{haraguchi-shimakawa}
Tadayuki Haraguchi and Kazuhisa Shimakawa, \emph{A model structure on the
  category of diffeological spaces}, arXiv preprint arXiv:1311.5668, 2013.

\bibitem[IZ13]{iglesias-diffeology}
Patrick Iglesias-Zemmour, \emph{Diffeology}, vol. 185, American Mathematical
  Soc., 2013.

\bibitem[Kih17]{kihara-quillen-equivalences}
Hiroshi Kihara, \emph{Quillen equivalences between the model categories of
  smooth spaces, simplicial sets, and arc-gengerated spaces}, arXiv preprint
  arXiv:1702.04070, 2017.

\bibitem[Kih19]{kihara-model}
\bysame, \emph{Model category of diffeological spaces}, Journal of Homotopy and
  Related Structures \textbf{14} (2019), no.~1, 51--90.

\bibitem[Kih20]{kihara-smooth-infinite}
\bysame, \emph{Smooth {H}omotopy of {I}nfinite-{D}imensional
  ${C}^\infty$-{M}anifolds}, Available at
  \url{https://arxiv.org/abs/2002.03618}, 2020.

\bibitem[Lur09]{htt}
Jacob Lurie, \emph{Higher topos theory}, Annals of Mathematics Studies, vol.
  170, Princeton University Press, Princeton, NJ, 2009. \MR{2522659
  (2010j:18001)}

\bibitem[Mil56]{milnor-universal-bundles}
John Milnor, \emph{Construction of universal bundles, {I}}, Annals of
  Mathematics (1956), 272--284.

\bibitem[MW17]{magnot-watts}
Jean-Pierre Magnot and Jordan Watts, \emph{The diffeology of {M}ilnor's
  classifying space}, Topology and its Applications \textbf{232} (2017),
  189--213.

\bibitem[OT19]{oh-tanaka-actions}
Yong-Geun Oh and Hiro~Lee Tanaka, \emph{Continuous and coherent actions on
  wrapped {F}ukaya categories}, arXiv:1911.00349, 2019.

\bibitem[OT20a]{oh-tanaka-localizations}
\bysame, \emph{{$A_\infty$}-categories, their $\infty$-category, and their
  localizations}, arXiv:2003.05806, 2020.

\bibitem[OT20b]{oh-tanaka-liouville-bundles}
\bysame, \emph{Holomorphic curves and continuation maps in {L}iouville
  bundles}, arXiv:2003.04977, 2020.

\bibitem[Sav13]{savelyev}
Yasha Savelyev, \emph{Global {F}ukaya {C}ategory and the space of {$A_\infty$}
  categories {I}}, Available at \url{https://arxiv.org/abs/1307.3991}, 2013.

\bibitem[Seg68]{segal-classifying-spaces}
Graeme Segal, \emph{Classifying spaces and spectral sequences}, Publications
  Math{\'e}matiques de l'IH{\'E}S \textbf{34} (1968), 105--112.

\bibitem[Tan19]{tanaka-paracyclic}
Hiro~Lee Tanaka, \emph{Cyclic structures and broken cycles}, arXiv:1907.03301,
  2019.

\bibitem[Tel14]{teleman-icm}
Constantin Teleman, \emph{Gauge theory and mirror symmetry}, arXiv preprint
  arXiv:1404.6305, 2014.

\end{thebibliography}

\end{document}